\newcommand{\F}{\mathcal F}
\newcommand{\K}{\mathcal K}
\newcommand{\C}{\mathscr C}
\newcommand{\IR}{\mathbb R}
\newcommand{\IN}{\mathbb N}
\newcommand{\w}{\omega}
\newcommand{\osc}{\mathrm{osc}}
\newcommand{\dec}{\mathrm{dec}}
\newcommand{\cov}{\mathrm{cov}}
\newcommand{\M}{\mathcal M}
\newcommand{\wid}{\mathrm{wd}}
\newcommand{\U}{\mathcal U}
\newcommand{\e}{\varepsilon}
\newtheorem{theorem}{Theorem}[section]
\newtheorem{proposition}[theorem]{Proposition}
\newtheorem{corollary}[theorem]{Corollary}
\newtheorem{problem}[theorem]{Problem}
\newtheorem{example}[theorem]{Example}
\title[On $\infty$-convex sets in function spaces]{On $\infty$-convex sets in spaces of scatteredly continuous functions}
\author{Taras Banakh, Bogdan Bokalo, and Nadiya Kolos}
\address{T.Banakh: Ivan Franko National University of Lviv (Ukraine) and Jan Kochanowski University in Kielce (Poland)}
\email{t.o.banakh@gmail.com}
\address{B.Bokalo, N.Kolos: Ivan Franko National University of Lviv}
\email{bogdanbokalo@mail.ru, nadiya\_kolos@ukr.net} 
\thanks{The first author has been partially financed by NCN means granted by decision DEC-2011/01/B/ST1/01439}
\keywords{Scatteredly continuous map, weakly discontinuous map,
$\infty$-convex subset, function space, potentially bounded set, $R$-separable space}
\subjclass{46A55, 46E99, 54C35, 54D65}
\dedicatory{Dedicated to Mitrofan Choban and Stoyan Nedev on the occasion of their 70th birthdays}
\begin{document}

\begin{abstract} Given a topological space $X$, we study the structure of $\infty$-convex subsets in the space $SC_p(X)$ of scatteredly continuous functions on $X$. Our main result says that for a topological space $X$ with countable strong fan tightness, each potentially bounded $\infty$-convex subset $\F\subset SC_p(X)$ is weakly discontinuous in the sense that each non-empty subset $A\subset X$ contains an open dense subset $U\subset A$ such that each function $f|U$, $f\in\F$, is continuous.
This implies that $\F$ has network weight $nw(\F)\le nw(X)$.
\end{abstract}
\maketitle

\section{Introduction}

In this paper we continue the study of the linear-topological
structure of the function spaces $SC_p(X)$, started in
\cite{BK1,BK2,BBK}. Here $SC_p(X)$ stands for the space of all
scatteredly continuous real-valued functions on a topological space
$X$, endowed with the topology of pointwise convergence. A function
$f:X\to Y$ between two topological spaces is called {\em scatteredly
continuous} if for each non-empty subset $A\subset X$ the
restriction $f|A$ has a point of continuity\footnote{The term ``{\em scatteredly continuous function}''  was suggested by Mitrofan Choban after a talk of the second author at the conference dedicated to 20th anniversary of the Chair of Algebra and Topology of Ivan Franko National University of Lviv that was held on September 28, 2001 in Lviv (Ukraine).}. By \cite{AB} or
\cite[4.4]{BB}, each scatteredly continuous function $f:X\to Y$ with
values in a regular topological space $Y$ is {\em weakly
discontinuous} in the sense that each non-empty subset $A\subset X$
contains an open dense subset $U\subset A$ such that the restriction
$f|U$ is continuous. This implies that $SC_p(X)$ is a linear space.

The space $SC_p(X)$ contains the linear subspace $C_p(X)$ of all continuous real-valued functions on $X$. By its topological properties the function spaces $SC_p(X)$ and $C_p(X)$ differ substantially. For example, for an uncountable metrizable separable space $X$, the function space $SC_p(X)$ contains non-metrizable compacta while $C_p(X)$ does not.
In spite of this difference, in \cite{BBK} it was observed that compact {\em convex} subsets of the space $SC_p(X)$ share many properties with compact convex subsets of the space $C_p(X)$. In particular, for a space $X$ with countable network weight, all compact convex subsets of the function space $SC_p(X)$ are metrizable. This follows from Corollary~1 of \cite{BBK} saying that for a countably tight topological space $X$, each $\sigma$-convex subset $\F\subset SC_p(X)$ has network weight $nw(\F)\le nw(X)$.

A subset $C$ of a linear topological space $L$ is called {\em $\sigma$-convex} if for any sequence $(x_n)_{n\in\w}$ of points of $K$ and any sequence $(t_n)_{n\in\w}$ of non-negative real numbers with $\sum_{n=0}^\infty t_n=1$ the series $\sum_{t=0}^\infty t_nx_n$ converges to a point of the set $K$.
It is easy to see that each compact convex subset $K$ of a locally convex linear topological space $L$ is $\sigma$-convex, and each $\sigma$-convex subset of $L$ is convex and bounded in $L$.
We recall that a subset $B$ of a linear topological space $L$ is {\em bounded} in $L$ if for each neighborhood $U\subset L$ of zero there is a number $n\in\IN$ such that $B\subset nU$.

A more general notion comparing to the $\sigma$-convexity is that of
$\infty$-convexity. Following \cite{BLM}, we define a subset $K$ of
a linear topological space $L$ to be {\em $\infty$-convex} if for
each bounded sequence $(x_n)_{n\in\w}$ in $K$ and each sequence
$(t_n)_{n\in\w}$ of non-negative real numbers with
$\sum_{n=0}^\infty t_n=1$ the series $\sum_{t=0}^\infty t_nx_n$
converges to a point of the set $K$. In Proposition~\ref{p5.1} we
shall prove that a subset $K$ of a  linear topological space $L$ is
$\sigma$-convex if and only if it is bounded and $\infty$-convex.

In this paper we shall study the structure of $\infty$-convex subsets in the spaces $SC_p(X)$ of scatteredly continuous functions on a topological space $X$. A principal question, which will be asked about $\infty$-convex subsets $\F\subset SC_p(X)$ is their weak discontinuity. A function family $\F\subset SC_p(X)$ is called {\em weakly discontinuous} if each subset $A\subset X$ contains a dense open subset $U\subset A$ such that for each function $f\in \F$ the restriction $f|U$ is continuous. It is easy to see that a function family $\F\subset SC_p(X)$ is weakly discontinuous if and only if its diagonal product $\Delta\F:X\to\IR^\F$, $\Delta\F:x\mapsto (f(x))_{f\in\F}$, is weakly discontinuous.

In \cite{BBK} we proved that for each countably tight space $X$,
each $\sigma$-convex subset $\F$ of the space $SC_p^*(X)$ of bounded
scatteredly continuous functions on $X$ is weakly discontinuous and
has network weight $nw(\F)\le nw(X)$. Our first theorem is the
generalization of this fact to $\infty$-convex subsets of the
function space $SC_p^*(X)$.

\begin{theorem}\label{t1.1} If a topological space $X$ has countable tightness, then each $\infty$-convex subset $\F\subset SC_p^*(X)$ is weakly discontinuous and has network weight $nw(\F)\le nw(X)$.
\end{theorem}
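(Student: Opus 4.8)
The plan is to prove that the diagonal product $\Delta\F\colon X\to\IR^\F$ is scatteredly continuous; since $\IR^\F$ is regular, \cite{AB}/\cite[4.4]{BB} then upgrade this to weak discontinuity of $\Delta\F$, equivalently of $\F$. So I would argue by contradiction, assuming $\Delta\F$ is not scatteredly continuous. Then some non-empty $A\subseteq X$ carries no point of continuity of $\Delta\F|A$; as an isolated point of $A$ would automatically be one, $A$ is crowded and $\Delta\F|A$ is \emph{nowhere} continuous, i.e. at every $x\in A$ some coordinate $f\in\F$ has $f|A$ discontinuous at $x$. Before building the contradiction I make two harmless reductions. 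First, fixing $f_*\in\F$ and replacing $\F$ by $\F-f_*$ (again $\infty$-convex, again inside $SC_p^*(X)$, and weakly discontinuous iff $\F$ is), I may assume $0\in\F$. Second, for $\F_N=\{f\in\F:\|f\|_\infty\le N\}$ each $\F_N$ is uniformly, hence pointwise, bounded and $\infty$-convex, so it is $\sigma$-convex; by \cite{BBK} every $\F_N$ is weakly discontinuous, whence every finite subfamily of $\F$ is, so the joint discontinuity set on $A$ of any finite subfamily is nowhere dense in $A$.

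The heart of the argument is a recursion along the tree $\w^{<\w}$, using countable tightness of $X$ to replace nets by sequences. I plan to build points $x_s\in A$ and witnesses $f_s\in\F$ ($s\in\w^{<\w}$) so that (a) for each $s$ the children $(x_{s^\frown n})_{n\in\w}$ converge to $x_s$ with $|f_s(x_{s^\frown n})-f_s(x_s)|\ge\e_s$ for some $\e_s>0$ (this is where nowhere continuity of $\Delta\F|A$ is used), and (b) each $x_s$ is a point of continuity in $A$ of every earlier witness $f_{s'}$, \emph{including its parent's witness}. The set $B=\{x_s:s\in\w^{<\w}\}$ is then a countable crowded subset of $A$. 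To make the $f_s$ summable I exploit $0\in\F$: set $w_s=\sigma_s f_s\in\F$ with $\sigma_s=\min(1,1/\|f_s\|_\infty)>0$, so $\|w_s\|_\infty\le1$ while $w_s$ still jumps along $(x_{s^\frown n})_n$ by $\sigma_s\e_s>0$. The sequence $(w_s)$ is uniformly bounded, hence bounded in $SC_p^*(X)$, so for any weights $q_s>0$ with $\sum_s q_s=1$ the $\infty$-convexity of $\F$ yields $g:=\sum_s q_s w_s\in\F$.

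It remains to see that $g$ contradicts its own scattered continuity. Fix a node $s$. Along $x_{s^\frown n}\to x_s$ the term $q_s w_s$ oscillates by at least $q_s\sigma_s\e_s$; the finitely many earlier terms contribute nothing in the limit by (b); and if the weights decrease fast enough, say $\sum_{s'\text{ later}}q_{s'}<\tfrac14 q_s\sigma_s\e_s$ (using $\|w_{s'}\|_\infty\le1$), the remaining tail contributes less than $\tfrac12 q_s\sigma_s\e_s$. Hence $g|B$ is discontinuous at every $x_s\in B$. As $B$ is non-empty, $g|B$ has no point of continuity, so $g$ is not scatteredly continuous — contradicting $g\in\F\subseteq SC_p^*(X)$. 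This proves that $\F$ is weakly discontinuous.

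I expect the main obstacle to be the interaction of the two new features absent from the $\sigma$-convex case of \cite{BBK}. Since $\F$ need not be bounded, the witnesses may have arbitrarily large norms and cannot be summed inside $\F$ directly; translating so that $0\in\F$ and then shrinking each witness toward $0$ to uniform norm $\le1$ is exactly what restores a bounded sequence for $\infty$-convexity while only rescaling, never destroying, the jumps. The genuinely delicate point is realizing (a) and (b) together: each child must sit on the parent's jump-sequence yet remain a continuity point of the parent's own witness, i.e. the witnesses must have \emph{clean} jumps — approached by continuity points carrying the far value — and the awkward case is a witness whose large values near $x_s$ lie entirely on its own (nowhere dense) discontinuity set. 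I plan to dispose of this by selecting witnesses cleverly: off the nowhere dense joint discontinuity set of the finitely many functions already used, nowhere continuity of $\Delta\F|A$ still furnishes a witness outside that finite subfamily, and by carrying the recursion inside the shrinking open dense subsets of $A$ obtained by deleting the closed nowhere dense discontinuity sets of the functions used so far; securing clean jumps in this way is the step I expect to require the most care. Finally, once weak discontinuity of $\F$ is known, the bound $nw(\F)\le nw(X)$ follows as in \cite{BBK}: a weakly discontinuous $\Delta\F$ decomposes $X$ into subspaces on which it is continuous, and a weakly discontinuous map does not raise network weight, yielding a network for $\F$ of cardinality $\le nw(X)$.
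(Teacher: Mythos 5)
Your global strategy is the same as the paper's: assume some non-empty $A\subset X$ carries no continuity point of $\Delta\F$, manufacture an infinite convex combination of (rescaled) witnesses that is everywhere discontinuous on a countable crowded subset, and contradict its membership in $SC_p(X)$; your rescaling $w_s=\sigma_sf_s+(1-\sigma_s)\cdot 0$ is exactly the paper's device $f_n=(1-\lambda_n)f_0+\lambda_ng_n$ for producing a \emph{bounded} sequence to which $\infty$-convexity applies. However, there is a genuine gap at the step you yourself flag as delicate, and your proposed repair does not work. You need each child $x_{s^\frown n}$ to witness the jump of the parent's witness $f_s$ at $x_s$ (condition (a)) while being a continuity point of $f_s$ (condition (b), explicitly ``including its parent's witness'' --- and this inclusion is unavoidable, since the parent's weight $q_{s'}$ is larger than $q_s$ and an uncontrolled oscillation of $q_{s'}w_{s'}$ at $x_s$ would swamp the jump of $q_sw_s$ there). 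But a scatteredly continuous function can have a jump that is carried entirely by its own discontinuity set: on $X=A=\mathbb{Q}$ take points $y_n\to x_s$ and $f_s=\chi_{\{y_n:n\in\w\}}$. This $f_s$ is bounded and weakly discontinuous, $f_s|A$ is discontinuous at $x_s$, yet every point realizing the oscillation at $x_s$ is one of the $y_n$, each a discontinuity point of $f_s|A$; on the open dense set $W$ of continuity points of $f_s|A$ one has $f_s|(W\cup\{x_s\})\equiv 0$. Hence ``carrying the recursion inside the shrinking open dense subsets obtained by deleting the discontinuity sets of the functions used so far'' destroys precisely the jump you need, and choosing a different witness does not help because nothing guarantees that any member of $\F$ has a \emph{clean} jump at the already-fixed point $x_s$.

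The paper circumvents this entirely and never needs clean jumps. It uses countable tightness to reduce to a countable $Q\subset X$, fixes one $f_0\in\F$ and an open dense $U\subset Q$ on which $f_0|U$ is continuous (so that the convexification with $f_0$ preserves each discontinuity), enumerates $U=\{x_n\}$, and then tracks only the discontinuity of the \emph{partial sums} $s_n=\sum_{k\le n}t_kf_k$ at $x_n$, secured by a halving trick: if $s_n+\tilde t_{n+1}f_{n+1}$ happens to be continuous at $x_{n+1}$ on $U$, then $s_n+\frac12\tilde t_{n+1}f_{n+1}$ is not, because $f_{n+1}|U$ is discontinuous there. The smallness condition $t_{n+1}\|f_{n+1}\|\le\frac18\osc_{x_n}(s_n|U)$ then prevents the tail from cancelling the oscillation of $s_n$ at $x_n$. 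To make your argument work you should replace requirement (b) by this bookkeeping on partial sums (or an equivalent device); as written, the joint realization of (a) and (b) --- the heart of your recursion --- cannot be carried out. The remaining ingredients of your proposal (the reduction via countable tightness, the boundedness rescaling, the geometric tail estimate, and the final bound $nw(\F)\le\dec(\F)\cdot nw(X)\le nw(X)$ via the resolution of $X$ along the weakly discontinuous family) are sound and agree with the paper.
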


Let us recall that a topological space $X$ has {\em countable tightness} if for each subset $A\subset X$ and a point $a\in\bar A$ in its closure there is an at most countable subset $B\subset A$ with $a\in\bar B$.

Observe that Theorem~\ref{t1.1} treats $\infty$-convex subsets of the function space $SC_p^*(X)$. For topological spaces $X$ with countable strong fan tightness, this theorem remains true for potentially bounded $\infty$-convex subsets of the function spaces $SC_p(X)$.

Following \cite{Sakai} we shall say that a topological space $X$ has {\em countable strong fan tightness} if for each sequence $(A_n)_{n\in\w}$ of subsets of $X$ and a point $a\in\bigcap_{n\in\w}\bar A_n$ there is a sequence of points $a_n\in A_n$, $n\in\w$, such that $a$ lies in the closure of the set $\{a_n\}_{n\in\w}$. The class of spaces of countable strong fan tightness and some related classes will be discussed in Section~\ref{s:fan}.

A subset $B\subset L$ of a linear topological space $L$ is called {\em potentially bounded} if for each sequence $(x_n)_{n\in\w}$ of points of $B$ there is a sequence $(t_n)_{n\in\w}$ of positive real numbers such that the set $\{t_nx_n\}_{n\in\w}$ is bounded in $L$. For example, the set $SC_p^*(X)$ is potentially bounded in the function space $SC_p(X)$.

\begin{theorem} If a topological space $X$ has countable strong fan tightness, then each potentially bounded $\infty$-convex subset $\F\subset SC_p(X)$ is weakly discontinuous and has network weight $nw(\F)\le nw(X)$.
\end{theorem}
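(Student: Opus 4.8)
The plan is to separate the two conclusions. Once $\F$ is known to be weakly discontinuous, the bound $nw(\F)\le nw(X)$ follows exactly as in Theorem~\ref{t1.1}: the diagonal product $\Delta\F\colon X\to\IR^\F$ is then weakly discontinuous, and weakly discontinuous maps do not raise network weight. So I would reduce everything to proving weak discontinuity, and argue by contradiction. By the transfinite oscillation/derivative characterization of weak discontinuity for maps into regular spaces (see \cite{AB,BB}), if $\Delta\F$ is not weakly discontinuous there is a non-empty $Y\subset X$ on which $\Delta\F|Y$ is discontinuous at every point. Since countable strong fan tightness is hereditary, I would replace $X$ by $Y$ and thereby assume that for every $x\in X$ some $f\in\F$ has $\osc(f,x)>0$. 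The target then becomes a single $g\in\F$ that fails to be weakly discontinuous, contradicting $g\in SC_p(X)$.

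The device that feeds possibly unbounded functions into the $\infty$-convexity is a base-point trick. Fix $p\in\F$. Given a sequence $(f_n)_{n\in\w}$ in $\F$, potential boundedness supplies scalars, which I shrink to $s_n\in(0,1]$ (shrinking preserves boundedness of $\{s_nf_n\}_{n\in\w}$), and then $g_n:=s_nf_n+(1-s_n)p\in\F$ by convexity, while $\{g_n\}_{n\in\w}$ is bounded. Hence for any weights $(w_n)_{n\in\w}$ with $\sum_n w_n=1$ the series $\sum_n w_ng_n$ converges to a point of $\F$ by $\infty$-convexity, and this point has the form $g=\sum_n c_nf_n+(1-\sigma)p$ with $c_n=w_ns_n$ and $\sigma=\sum_n c_n$. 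Thus I can realise inside $\F$ arbitrary damped series of my chosen functions, at the price of the fixed remainder $(1-\sigma)p$.

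The heart of the proof is a fan construction producing a crowded (dense-in-itself) countable set $Z\subset X$ together with functions and weights making $g$ oscillate at every point of $Z$. Using that each point of $X$ is a discontinuity point of some member of $\F$, I would build a tree of points: a root, a sequence converging to it, sequences converging to each of those, and so on, attaching to every node a function from $\F$ oscillating there by a definite amount. Countable strong fan tightness is precisely what permits the simultaneous selection of one witnessing point from each of the infinitely many oscillation sets clustered at a node so that they assemble into a single sequence accumulating at that node; this is exactly where the hypothesis is genuinely stronger than the countable tightness of Theorem~\ref{t1.1}. I would then choose the $w_n$ (hence $c_n=w_ns_n$) to decay fast enough that the series converges, yet so that at each node the term contributed by the function chosen there dominates the tails and the remainder, giving $\osc(g,z)>0$ for all $z\in Z$. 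As $Z$ is crowded, $g|Z$ has no point of continuity, contradicting the scattered, hence weak, continuity of $g\in SC_p(X)$.

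The main obstacle is this final bookkeeping. Because potential boundedness forces the effective weight on $f_n$ to be the possibly tiny $c_n=w_ns_n$, I must select each oscillation witness at a magnitude large enough to survive the damping, and I must prevent the remainder $(1-\sigma)p$ from cancelling the oscillation — the latter handled by working on a subset where $p$ is continuous, which exists since $p$ is itself weakly discontinuous. Reconciling the rapid decay of the weights needed for convergence with the requirement that each node's oscillation dominate its tails is the delicate quantitative step, and strong fan tightness is the tool that makes the coherent choice of points feasible.
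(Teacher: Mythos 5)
Your reduction to weak discontinuity, the passage to a countable set via countable tightness, and the base-point trick $g_n=(1-s_n)p+s_nf_n$ with scalars supplied by potential boundedness all match the paper's proof of Theorem~\ref{t9.1}. But there is a genuine gap in the step you defer as ``delicate bookkeeping.'' Boundedness of $\{g_n\}_{n\in\w}$ in $SC_p(X)$ means only pointwise boundedness: $\sup_n|g_n(x)|<\infty$ for each $x$, not that the $g_n$ are uniformly (or even locally uniformly) bounded. Consequently, at a node $x$ of your tree the tail $\sum_{k>n}w_kg_k$ need not have small oscillation near $x$ no matter how fast the weights $w_k$ decay: if the sequence $(g_k)$ is unbounded on every neighborhood of $x$, the quantity $\osc_x\bigl(\sum_{k>n}w_kg_k\bigr)$ can remain large or infinite for every choice of weights. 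So the inductive domination scheme of Theorem~\ref{t6.1}, which you propose to reuse, is only available when the $g_n$ are uniformly bounded on some non-empty open set; your outline never establishes that and the claim ``I must select each oscillation witness at a magnitude large enough to survive the damping'' cannot be realized in general, since $\osc_{x_n}(f_n)$ is a fixed finite number and the damping $c_n$ is dictated by the (unbounded) tails rather than by anything you control.

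The paper resolves exactly this obstruction by a dichotomy you are missing. Either there is a non-empty open $V\subset U$ on which all $g_n$ are bounded, and then the argument of Theorem~\ref{t6.1} applies verbatim on $V$; or else for every such $V$ some $g_n|V$ is unbounded, in which case the sets $D_m=\{x\in U:\sup_n|g_n(x)|>m\}$ are dense in $U$, and scattered $R$-separability (the consequence of countable strong fan tightness actually used) selects points $x_k\in D_{16^k}$ forming a \emph{non-scattered} countable set; a convex series $\sum_kt_kg_{n_k}$ with carefully chosen $t_k\in[8^{-k},2\cdot8^{-k}]$ then satisfies $|s(x_k)|\to\infty$, so $s$ is unbounded on a non-scattered set and hence not weakly discontinuous, contradicting $\infty$-convexity. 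Your fan construction uses strong fan tightness to cluster oscillation witnesses at nodes, which is a plausible device for the bounded case but does not substitute for this second, unbounded-case argument; without the dichotomy the proof does not close.
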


This theorem follows from a more general Theorem~\ref{t9.1} and Corollary~\ref{c4.3}.
The potential boundedness is a necessary condition in this theorem because for a $\sigma$-compact perfectly normal space $X$, each weakly discontinuous family $\F\subset SC_p(X)$ is potentially bounded, see Corollary~\ref{c7.2}.

\section{Cardinal Invariants of topological spaces} In this section we recall the definition of some cardinal invariants that will appear in the next sections. More information on cardinal invariants of topological spaces can be found in \cite{En} and \cite{Juh}.

Let us recall that for a topological space $X$ its
\begin{itemize}
\item {\em weight} $w(X)$ is the smallest cardinality of a base of the topology of $X$;
\item {\em network weight} $w(X)$ is the smallest cardinality of a network of the topology of $X$;
\item {\em character} $\chi(X)$ is the smallest infinite cardinal $\kappa$ such that each point $x\in X$ has a neighborhood base of cardinality $\le\kappa$;
\item {\em tightness} $t(X)$ is the smallest infinite cardinal $\kappa$ such that for each subset $A\subset X$ and a point $a\in \bar A$ in its closure there is a subset $B\subset A$ of cardinality $|B|\le\kappa$ such that $a\in \bar B$;
\item {\em cellularity} $c(X)$ is the smallest infinite cardinal $\kappa$ such that no disjoint family $\U$ of open subsets of $X$ has cardinality $|\U|>\kappa$;
\item {\em spread} $s(X)=\sup\{c(Z):Z\subset X\}=\sup\{|D|:D$ is a discrete subspace of $X\}$;
\item {\em Lindel\"of number} $l(X)$ is the smallest infinite cardinal $\kappa$ such that each open cover of $X$ has a subcover of cardinality $\le\kappa$;
\item {\em hereditary Lindel\"of number} $hl(X)=\sup\{l(Z):Z\subset X\}$;
\item {\em density $d(X)$} if the smallest cardinality of a dense subset of $X$;
\item {\em hereditary density} $hd(X)=\sup\{d(Z):Z\subset X\}$;
\item {\em compact density} $kd(X)$ is the smallest cardinality $\C$ of a family of compact subsets of $X$ with dense union $\bigcup\C$;
\item {\em closed hereditary compact density} $h_{cl}kd(X)=\sup\{kd(F):F$ is a closed subspace of $X\}$.
\end{itemize}

It is easy to see that each topological space $X$ has closed hereditary density $h_{cl}kd(X)\le hd(X)$ and compact spaces $X$ have $kd(X)=h_{cl}kd(X)=1$.

\section{Decomposition numbers of function families}

For topological spaces $X,Y$ and a function family $\F\subset Y^X$
its ({\em closed}) {\em decomposition number} $\dec(\F)$ (resp.
$\dec_{cl}(\F)$) is defined as the smallest cardinality $|\C|$ of a
cover of $X$ by (closed or finite) subsets such that for each set
$C\in\C$ and a function $f\in\F$ the restriction $f|C$ is
continuous. It is clear that $\dec(\F)\le\dec_{cl}(\F)\le|X|$.

\begin{proposition}\label{p3.1} For any topological space $X$, each function family $\F\subset \IR^X$ has network weight $nw(\F)\le dec(\F)\cdot nw(X)$.
\end{proposition}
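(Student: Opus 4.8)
The plan is to adapt the classical argument that $nw(C_p(X))\le nw(X)$ to the present situation, in which members of $\F$ are continuous not on all of $X$ but only on the pieces of a decomposing cover. So fix a network $\mathcal N$ for $X$ with $|\mathcal N|=nw(X)$ and a cover $\C$ of $X$ witnessing $\dec(\F)$, so that $f|C$ is continuous for every $C\in\C$ and every $f\in\F$. First I would refine $\mathcal N$ along $\C$, setting $\mathcal N^*=\{N\cap C:N\in\mathcal N,\ C\in\C\}$; then $|\mathcal N^*|\le \dec(\F)\cdot nw(X)$, and for each $C\in\C$ the trace $\{N\cap C:N\in\mathcal N\}$ is a network for the subspace $C$. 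Fixing the countable base $\mathcal B$ of $\IR$ consisting of rational intervals and writing $[N,V]=\{f\in\F:f(N)\subset V\}$, I would then propose as a network for $\F$ the family $\mathcal M$ of all finite intersections $\bigcap_{i\le n}[N_i,V_i]$ with $N_i\in\mathcal N^*$ and $V_i\in\mathcal B$. Since the finite sequences over a set of infinite cardinality $\mu$ number $\mu$, this gives $|\mathcal M|\le \dec(\F)\cdot nw(X)$, which is the desired bound.

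The substantive step is to check that $\mathcal M$ is indeed a network for $\F$ in the topology of pointwise convergence. Given $f\in\F$ and a basic neighborhood $W=\{g\in\F:|g(x_i)-f(x_i)|<\e\text{ for }i\le n\}$, for each $i$ I would pick $C_i\in\C$ with $x_i\in C_i$ and a rational interval $V_i\in\mathcal B$ with $f(x_i)\in V_i\subset(f(x_i)-\e,f(x_i)+\e)$. The decomposition enters exactly here: although $f$ may be discontinuous at $x_i$, the restriction $f|C_i$ is continuous, so $(f|C_i)^{-1}(V_i)$ is relatively open in $C_i$ and contains $x_i$; since the trace of $\mathcal N$ on $C_i$ is a network for $C_i$, there is $N_i\in\mathcal N^*$ with $x_i\in N_i\subset (f|C_i)^{-1}(V_i)$, whence $f(N_i)\subset V_i$. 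Then $M=\bigcap_{i\le n}[N_i,V_i]$ lies in $\mathcal M$ and satisfies $f\in M\subset W$: membership of $f$ is immediate from $f(N_i)\subset V_i$, while any $g\in M$ has $g(x_i)\in g(N_i)\subset V_i\subset(f(x_i)-\e,f(x_i)+\e)$ because $x_i\in N_i$, so $|g(x_i)-f(x_i)|<\e$.

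I expect the main obstacle to be precisely this pull-back step, and it is the only place where the hypotheses are used. Were the functions globally continuous one could run the argument with the single network $\mathcal N$; here the preimage $(f|C)^{-1}(V)$ is open only relatively in $C$, so one must capture it inside a trace network on $C$ rather than inside $\mathcal N$ itself. This is what forces us to intersect the network with the pieces of $\C$, and hence what produces the factor $\dec(\F)$ in the bound.
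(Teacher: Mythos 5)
Your proof is correct, but it takes a more hands-on route than the paper. The paper's proof is a reduction: it forms the topological sum $Z=\oplus\C$ of the pieces of the decomposing cover, notes that the induced embedding $\pi^*:\IR^X\to\IR^Z$ sends $\F$ into $C_p(Z)$, and then quotes Arkhangel'skii's theorem $nw(C_p(Z))\le nw(Z)$ together with $nw(Z)=\sum_{C\in\C}nw(C)\le|\C|\cdot nw(X)$. Your argument is exactly what one gets by inlining the proof of that quoted theorem: your trace family $\mathcal N^*=\{N\cap C:N\in\mathcal N,\ C\in\C\}$ is precisely a network for the sum $Z$, and your family $\mathcal M$ of finite intersections $\bigcap_{i\le n}[N_i,V_i]$ is the standard network for $C_p(Z)$ pulled back to $\F$. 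The pull-back step you flag as the crux --- that $(f|C)^{-1}(V)$ is open only relatively in $C$, forcing the use of trace networks --- is the same phenomenon the paper handles by passing to the sum topology on $Z$, where those relative open sets become genuinely open. What your version buys is self-containedness and an explicit description of the network witnessing the bound; what the paper's version buys is brevity and a cleaner conceptual statement (weakly decomposable families are continuous families over a finer space), which the authors then reuse in Proposition~\ref{p3.2}. Both arguments are sound and give the same cardinality bound.
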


\begin{proof} Let $\C$ be a cover $X$ of cardinality $|\C|=\dec(\F)$ such that for each set $C\in\C$ and a function $f\in\F$ the restriction $f|C$ is continuous. Let $Z=\oplus\C$ be the topological sum of the spaces $C\in\C$ and $\pi:Z\to X$ be the natural projection, which induces a linear topological embedding $\pi^*:\IR^X\to\IR^Z$, $\pi^*:f\mapsto f\circ \pi$. The choice of the cover $\C$ guarantees that $\pi^*(\F)\subset C_p(Z)$. By Theorem~I.1.3 of \cite{Arh}, $nw(C_p(Z))\le nw(Z)$. Consequently,
$$nw(\F)=nw(\pi^*(\F))\le nw(C_p(Z))\le nw(Z)=\sum_{C\in\C}nw(C)\le |\C|\cdot nw(X)=\dec(\F)\cdot nw(X).$$
\end{proof}

For compact function families $\F\subset \IR^X$ we have a more precise upper bound on the (network) weight $nw(\F)=w(\F)$.

\begin{proposition}\label{p3.2} For any topological space $X$, each compact function family $\F\subset\IR^X$ has weight $w(\F)\le \dec_{cl}(\F)\cdot h_{cl}kd(X)\cdot s(X)$.
\end{proposition}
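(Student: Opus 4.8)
The plan is to reduce the compact family $\F$ to ``finitely continuous'' pieces, resolve each piece into compacta, and then control the weight coordinatewise. First I would fix a cover $\C$ of $X$ by closed (or finite) sets with $|\C|=\dec_{cl}(\F)$ such that $f|C$ is continuous for every $C\in\C$ and every $f\in\F$. Since each $C\in\C$ is closed in $X$, we have $kd(C)\le h_{cl}kd(X)$, so $C$ admits a family $\K_C$ of compact subsets whose union is dense in $C$ and with $|\K_C|\le h_{cl}kd(X)$. Setting $\K=\bigcup_{C\in\C}\K_C$ gives $|\K|\le\dec_{cl}(\F)\cdot h_{cl}kd(X)$. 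The family $\K$ separates the points of $\F$: if $f\ne g$ in $\F$, they differ at some point of some $C\in\C$, hence on a non-empty subset of $C$ that is open (as $f|C$ and $g|C$ are continuous) and therefore meets the dense set $\bigcup\K_C$; so $f|K\ne g|K$ for some $K\in\K$. For each $K\in\K$ the restriction operator $\F\to C_p(K)$, $f\mapsto f|K$, is continuous and its image $\F|K$ is a compact subspace of $C_p(K)$.

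Next, the diagonal of these restrictions $\F\to\prod_{K\in\K}\F|K$ is a continuous injection (injectivity being exactly the separation just proved), and since $\F$ is compact it is a topological embedding. Hence
$$w(\F)\le w\Big(\prod_{K\in\K}\F|K\Big)\le|\K|\cdot\sup_{K\in\K}w(\F|K)\le\dec_{cl}(\F)\cdot h_{cl}kd(X)\cdot\sup_{K\in\K}w(\F|K).$$
It therefore remains to bound the weight of each compact factor by the spread, namely to prove $w(\F|K)\le s(K)$; since $s(K)\le s(X)$ for the subspace $K\subseteq X$, this yields the asserted estimate.

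The crux is thus the following fact about compacta in function spaces: if $K$ is compact and $\mathcal G\subseteq C_p(K)$ is compact, then $w(\mathcal G)\le s(K)$. It suffices to find a set $K_0\subseteq K$ separating the points of $\mathcal G$ with $|K_0|\le s(K)$, for then restriction embeds the compactum $\mathcal G$ into $\IR^{K_0}$ and $w(\mathcal G)\le|K_0|$. To produce such a $K_0$, I would cover the off-diagonal $\{(f,g)\in\mathcal G\times\mathcal G:f\ne g\}$ by the open sets $V_x=\{(f,g):f(x)\ne g(x)\}$, $x\in K$, and try to extract a subcover indexed by a small $K_0$; equivalently, run a transfinite recursion choosing $x_\xi\in K$ together with pairs $f_\xi\ne g_\xi$ in $\mathcal G$ that agree on all previously chosen points, manufacturing a large separated subspace of $K$ whenever $w(\mathcal G)$ exceeds $s(K)$. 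The main obstacle I expect lies precisely here: the naive recursion only produces a left-separated subspace of $K$, which merely bounds $w(\mathcal G)$ by $hd(K)$. The delicate point is to exploit the compactness of $\mathcal G$ --- passing to a complete accumulation point of the differences $f_\xi-g_\xi$ in the compact set $\mathcal G-\mathcal G\subseteq C_p(K)$ and pigeonholing their values at the $x_\xi$ away from $0$ --- in order to upgrade the left-separated sequence to a genuine \emph{discrete} subspace of $K$, so that the bound sharpens from the hereditary density to the spread $s(K)$.
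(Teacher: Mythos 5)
Your reduction is exactly the paper's: the same cover $\C$, the same families $\K_C$ of compacta with dense union, the same observation that density of $\bigcup\K_C$ in each $C$ plus continuity of the restrictions makes the diagonal of restriction maps injective, and the same use of compactness of $\F$ to turn that injection into an embedding into $\prod_{K}\F|K$, leaving only the bound $w(\F|K)\le s(K)$ for each compact $K$. But at that point your argument stops being a proof. You correctly identify the statement ``$K$ compact, $\mathcal G\subset C_p(K)$ compact $\Rightarrow w(\mathcal G)\le s(K)$'' as the crux, and then you only describe a strategy you \emph{hope} will work (a transfinite recursion producing a separating set $K_0\subset K$ of size $\le s(K)$), while conceding that the naive version of that recursion yields only a left-separated subspace and hence the weaker bound $w(\mathcal G)\le hd(K)$. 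The step you defer --- ``upgrading'' the left-separated sequence to a discrete one by passing to a complete accumulation point of the differences $f_\xi-g_\xi$ and pigeonholing their values away from $0$ --- is precisely where all the difficulty lives, and as written it is not carried out; there is no argument showing that the points $x_\xi$ you select can be made discrete in $K$. So the proposal has a genuine gap at its key lemma.

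The paper closes this gap not by a direct combinatorial construction but by dualizing: the evaluation map $\delta_\cdot:K\to C_p(\F_K)$, $x\mapsto(f\mapsto f(x))$, sends the compact $K$ onto a compact subset $E_K$ of $C_p(\F_K)$ with $\F_K$ compact, i.e.\ onto an \emph{Eberlein compact}; since spread does not increase under this continuous surjection, $s(E_K)\le s(K)$, hence $c(E_K)\le s(K)$, and the nontrivial theorem that the weight of an Eberlein compact equals its cellularity \cite[III.5.8]{Arh} gives $w(E_K)\le s(K)$. Then $\F_K$ factors through $C_p(E_K)$ and $w(\F_K)=nw(\F_K)\le nw(E_K)\le s(K)$. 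If you want to keep your more elementary route, you must actually prove the existence of a point-separating set of size $\le s(K)$ (morally, that $d(E_K)\le s(K)$), which is essentially the same Eberlein-compacta fact in disguise; citing or reproving that result is unavoidable, and your sketch does not yet do either.
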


\begin{proof} Let $\C$ be a cover of $X$ by closed or finite subsets such that $|\C|=\dec_{cl}(\F)$ and for each $C\in\C$ and $f\in\F$ the restriction $f|C$ is continuous. Let $Y=\oplus\C$ be the topological sum of the spaces $C\in\C$ and $\pi:Y\to X$ be the natural projection, which induces a linear topological embedding $\pi^*:\IR^X\to\IR^Y$, $\pi^*:f\mapsto f\circ \pi$. The choice of the cover $\C$ guarantees that $\pi^*(\F)\subset C_p(Z)$. For each space $C\in\C$ choose a family $\K_C$ of compact subsets such that $|\K_C|=kd(C)$ and $\cup\K_C$ is dense in $C$. Since $C$ is closed or finite (and hence compact), we get $kd(C)\le h_{cl}kd(X)$. Consider the topological sum $Z=\bigoplus\limits_{C\in\C}\bigoplus\K_C$ and the natural projection $p:Z\to Y$, which induces a linear continuous operator $p^*:C_p(Y)\to C_p(Z)$, $p^*:f\mapsto f\circ p$. Since $p(Z)$ is dense in $Y$, the operator $p^*$ is injective. By the compactness of the function family $\F$, the restriction $p^*\circ \pi^*|\F:\F\to C_p(Z)$, being injective and continuous, is a topological embedding. Consequently, the compact function family $\F\subset\IR^X$ is homeomorphic to the compact function family $\F_Z=p^*\circ\pi^*(\F)\subset C_p(Z)$.

For each compact subset $K\in\K_C$, consider the restriction operator $R_K:C_p(Z)\to C_p(K)$, $R:f\mapsto f|K$, and let $\F_K=R_K(\F_Z)\subset C_p(K)$.
Since $Z=\bigoplus_{C\in\C}\bigoplus\K_C$, the restriction operators $R_K$, $K\in\K_C$, $C\in\C$, compose a topological embedding $R:C_p(Z)\to\prod_{C\in\C}\prod_{K\in\K_C}C_p(K)$. Consequently,
$$w(\F)=w(\F_Z)\le\sum_{C\in\C}\sum_{K\in\K_C}w(\F_K).$$

So, it remains to evaluate the weight $w(\F_K)$ of the compact function family $\F_K\subset C_p(K)$ for each compact subset $K\in\K_C$, $C\in\C$. Observe that the compact space $K$ is homeomorphic to a compact subset of the space $X$ and hence has spread $s(K)\le s(X)$.

The compact subset $\F_K\subset C_p(K)$ induces a separately continuous function $\delta:\F_K\times K\to \IR$, $\delta:(f,x)\mapsto f(x)$, and a continuous function $\delta_\cdot:K\to C_p(\F_K)$ assigning to each point $x\in K$ the function $\delta_x:\F_K\to \IR$, $\delta_x:f\mapsto f(x)=\delta(f,x)$.
Then the image $E_K=\delta_\cdot(K)\subset C_p(\F_K)$ is an Eberlein compact with spread $s(E_K)\le s(K)\le s(X)$ and cellularity $c(E_K)\le s(E_K)\le s(X)$. Since the weight of Eberlein compacta is equal to their cellularity \cite[III.5.8]{Arh}, we conclude that $w(E_K)=c(E_K)\le s(X)$.

The surjective map $\delta_\cdot:K\to E_K$ induces a linear topological embedding $\delta_\cdot^*:C_p(E_K)\to C_p(K)$ such that $\F_K\subset \delta_\cdot^*(C_p(E_K))$. Then $w(\F_K)=nw(\F_K)\le nw(C_p(E_K))\le nw(E_K)\le s(X)$.

Finally,
$$
\begin{aligned}
w(\F)&=w(\F_Z)\le\sum_{C\in\C}\sum_{K\in\K_C}w(\F_K)\le\\
 &\le\sum_{C\in\C}|\K_C|\cdot s(X)=\sum_{C\in\C}kd(C)\cdot s(X)\le|\C|\cdot h_{cl}kd(X)\cdot s(X)=\dec_{cl}(\F)\cdot h_{cl}kd(X)\cdot s(X).
\end{aligned}
 $$
\end{proof}

\begin{corollary}\label{c3.3} For any compact topological space $X$, each compact function family $\F\subset\IR^X$ has weight $w(\F)\le \dec_{cl}(\F)\cdot s(X)$.
\end{corollary}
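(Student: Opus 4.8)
The plan is simply to specialize Proposition~\ref{p3.2} to the case of a compact domain $X$. That proposition already furnishes the bound $w(\F)\le\dec_{cl}(\F)\cdot h_{cl}kd(X)\cdot s(X)$ for an \emph{arbitrary} topological space $X$, so the only thing that needs checking is that the factor $h_{cl}kd(X)$ collapses to $1$ as soon as $X$ is compact. Everything else is inherited verbatim from the proposition.

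First I would recall the definition $h_{cl}kd(X)=\sup\{kd(F):F\text{ is a closed subspace of }X\}$. When $X$ is compact, every closed subspace $F\subset X$ is itself compact, and so $F$ is covered by the single compact set $F$; this witnesses $kd(F)=1$. Taking the supremum over all closed subspaces of $X$ then yields $h_{cl}kd(X)=1$, which is precisely the observation recorded after the list of cardinal invariants in the preceding section.

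Substituting $h_{cl}kd(X)=1$ into the inequality of Proposition~\ref{p3.2} gives at once $w(\F)\le\dec_{cl}(\F)\cdot s(X)$, which is the desired conclusion. Since every step is a direct substitution into an already-established bound, there is no genuine obstacle to overcome here: the entire technical content resides in Proposition~\ref{p3.2}, and the present statement is merely its most natural instance for compact $X$. The one point worth stating explicitly in the write-up is the elementary argument that compactness forces $h_{cl}kd(X)=1$, so that the reader sees exactly why the middle factor disappears.
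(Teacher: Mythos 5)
Your proposal is correct and coincides with the paper's intended argument: the corollary is stated immediately after Proposition~\ref{p3.2} with no separate proof precisely because, as noted at the end of the section on cardinal invariants, compact spaces satisfy $h_{cl}kd(X)=1$, so the bound of the proposition collapses to $\dec_{cl}(\F)\cdot s(X)$. Nothing further is needed.
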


\section{Scatteredly continuous and weakly discontinuous function families}

A family of
functions $\F\subset Y^X$ from a topological space $X$ to a
topological space $Y$ is called
\begin{itemize}
\item {\em continuous} if each function $f\in \F$ is continuous;
\item {\em scatteredly continuous} if each non-empty subset
$A\subset X$ contains a point $a\in A$ at which each function
$f|A:A\to Y$, $f\in\F$, is continuous;
\item {\em weakly discontinuous}  if each non-empty subset $A\subset X$ contains
an open dense subspace $U\subset A$ such that each function
$f|U:U\to Y$, $f\in\F$, is continuous.
\end{itemize}

The following simple characterization can be derived from the
corresponding definitions and Theorem~4.4 of \cite{BB} (saying that
each scatteredly continuous function with values in a regular
topological space is weakly discontinuous).

\begin{proposition}\label{p4.1} A function family $\F\subset Y^X$
is scatteredly continuous (resp. weakly discontinuous) if and only
if so is the function $\Delta\F:X\to Y^\F$, $\Delta\F:x\mapsto\big(f(x)\big)_{f\in\F}$.
Consequently, for a regular topological space $Y$, a function family
$\F\subset Y^X$ is scatteredly continuous if and only if it is weakly discontinuous.
\end{proposition}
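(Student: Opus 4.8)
The plan is to reduce the whole statement to the elementary fact that a map into a Tychonoff product is continuous at a point exactly when all of its coordinates are. For each $f\in\F$ let $\pi_f:Y^\F\to Y$ be the $f$-th coordinate projection, so that $\pi_f\circ\Delta\F=f$. First I would fix a non-empty subset $A\subset X$ and a point $a\in A$ and record the key observation: the restriction $\Delta\F|A:A\to Y^\F$ is continuous at $a$ if and only if every composition $\pi_f\circ(\Delta\F|A)=f|A$ is continuous at $a$. This is immediate from the definition of the product topology, since a basic neighborhood of $\Delta\F(a)$ in $Y^\F$ restricts only finitely many coordinates. The same coordinatewise criterion, applied not just at a point but on a whole open set, shows that for an open subset $U\subset A$ the restriction $\Delta\F|U$ is continuous on all of $U$ if and only if each $f|U$, $f\in\F$, is continuous.

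With this characterization in hand, both equivalences follow by matching definitions. For scattered continuity, the assertion that $\F$ is scatteredly continuous says that every non-empty $A\subset X$ contains a point $a\in A$ at which all the functions $f|A$, $f\in\F$, are simultaneously continuous; by the criterion this is precisely the statement that $\Delta\F|A$ has a point of continuity, and quantifying over all non-empty $A$ gives that $\F$ is scatteredly continuous if and only if $\Delta\F$ is. For weak discontinuity, the assertion that $\F$ is weakly discontinuous says that every non-empty $A$ contains an open dense subset $U\subset A$ on which all $f|U$ are continuous; by the on-a-set version of the criterion this says exactly that $\Delta\F|U$ is continuous, and again quantifying over $A$ shows that $\F$ is weakly discontinuous if and only if $\Delta\F$ is.

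For the final ``Consequently'' clause I would argue as follows. Weak discontinuity always implies scattered continuity, even for a single map: if $g|U$ is continuous on an open dense $U\subset A$, then since $U$ is open in $A$ it is a neighborhood of each of its points in $A$, so every point of $U$ is a point of continuity of $g|A$. For the converse I would use that an arbitrary product of regular spaces is regular, so that $Y^\F$ is regular whenever $Y$ is. Then $\Delta\F:X\to Y^\F$ is a single map into a regular space, and if $\F$ (equivalently $\Delta\F$) is scatteredly continuous, Theorem~4.4 of \cite{BB} applies to $\Delta\F$ and yields that $\Delta\F$ is weakly discontinuous; by the equivalence just proved, $\F$ is then weakly discontinuous as well.

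I do not expect a genuine obstacle here: the proposition is essentially a repackaging of the product-topology criterion for continuity together with the single-map result of \cite{BB}. The only point calling for slight care is that weak discontinuity demands continuity of each $f|U$ on the whole of $U$, so one must invoke the coordinatewise criterion for continuity on a set rather than merely at a point; both forms, however, are equally elementary, and the passage to a regular codomain $Y^\F$ relies only on the productivity of regularity.
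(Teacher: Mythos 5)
Your proof is correct and follows exactly the route the paper intends: the paper gives no detailed argument but states that the proposition ``can be derived from the corresponding definitions and Theorem~4.4 of \cite{BB}'', and your write-up is precisely that derivation, with the coordinatewise continuity criterion for maps into $Y^\F$ and the regularity of the product supplying the details. No gaps.
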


We say that two spaces $X,Z$ are {\em weakly homeomorphic} if there is a bijective map $f:X\to Y$ such that the functions $f$ and $f^{-1}$ are weakly discontinuous.

In fact, each weakly discontinuous function family $\F\subset \IR^X$ is homeomorphic to a continuous function family over a space $Z$ which is weakly homeomorphic to $X$.
The space $Z$ can be canonically constructed using the family $\F$. This can be done as follows.

For a subset $A\subset X$ let $\F|A=\{f|A:f\in\F\},$ $D(\F|A)$ be the set of points at which the function $\Delta\F|A$ is discontinuous, and $\bar D(\F|A)$ be the closure of the set $D(\F|A)$ in $A$.
The weak discontinuity of $\F$ guarantees that the set $\bar D(\F|A)$ is nowhere dense in $A$ and hence $\bar D(\F|A)\ne A$ if $A\ne\emptyset$.

Let $D_0(\F)=X$ and by transfinite induction for each ordinal $\alpha>0$ define a closed subset $D_\alpha(\F)$ of $X$ letting
$$D_\alpha(\F)=\bigcap_{\beta<\alpha}\bar D(\F|D_\beta(\F)).$$  Since $(D_\alpha(\F))_{\alpha}$ is a decreasing sequence of closed subspaces of $X$ and $D_{\alpha+1}(\F)\ne D_\alpha(\F)$ if $D_\alpha(\F)\ne\emptyset$, there is an ordinal $\alpha$ such that $D_\alpha(\F)=\emptyset$. The smallest ordinal with $D_{\alpha}(\F)=\emptyset$ if denoted by $\wid(\F)$ and called the {\em index of weak discontinuity} of $\F$, see \cite[\S4]{BB}. It is clear that $|\wid(\F)|\le hl(X)$.

The topological sum
$$X_\F=\bigoplus\limits_{\alpha<\wid(\F)}D_{\alpha}(\F)\setminus
D_{\alpha+1}(\F)$$ is called {\em the resolution space} of the
weakly discontinuous family $\F\subset SC_p(X)$. The natural
projection $\pi_\F:X_\F\to X$ is bijective and continuous, while its
inverse $\pi^{-1}_\F:X\to X_\F$ is weakly discontinuous. So, the
spaces $X$ and $X_\F$ are weakly homeomorphic.

Moreover, the bijective map $\pi:X_\F\to X$ induces a linear topological isomorphism $\pi^*_\F:\IR^X\to \IR^{X_\F}$, $\pi^*:f\mapsto f\circ\pi$, such that $\pi^*_\F(\F)\subset C_p(X_\F)$. So, the weakly discontinuous function family $\F$ is homeomorphic to the continuous function family $\pi^*_\F(\F)\subset C_p(X_\F)$ on the resolution space $X_\F$.

This fact will be used to find some upper bounds on the (closed) decomposition number $\dec(\F)$ (resp. $\dec_c(\F)$) of a weakly discontinuous function family $\F\subset SC_p(X)$.

\begin{proposition}\label{p4.2} For any (regular) topological space $X$ and each weakly discontinuous function family $\F\subset SC_p(X)$ we get $\dec(\F)\le |\wid(\F)|\le hl(X)$ (and $\dec_{cl}(\F)\le hl(X)$).
\end{proposition}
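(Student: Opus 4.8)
The plan is to cover $X$ by the ``strata'' $S_\alpha:=D_\alpha(\F)\setminus D_{\alpha+1}(\F)$, $\alpha<\wid(\F)$, of the transfinite chain constructed just before the statement. First I would observe that these strata partition $X$: the closed sets $D_\alpha(\F)$ decrease, $D_0(\F)=X$, and $D_{\wid(\F)}(\F)=\emptyset$, so each point of $X$ lies in a unique $S_\alpha$. Next I would check that on each $S_\alpha$ every $f\in\F$ is continuous. This is exactly the content of the identity $D_{\alpha+1}(\F)=\bar D(\F|D_\alpha(\F))$: the set $D(\F|D_\alpha(\F))$ is the discontinuity set of $\Delta\F|D_\alpha(\F)$, so $\Delta\F|D_\alpha(\F)$ is continuous at every point of $S_\alpha=D_\alpha(\F)\setminus\bar D(\F|D_\alpha(\F))$; since $S_\alpha$ is open in $D_\alpha(\F)$, the restriction $\Delta\F|S_\alpha$ is continuous. (Equivalently, this is the statement that $\pi^*_\F(\F)\subset C_p(X_\F)$, read off the summand $S_\alpha$ of the resolution space $X_\F$.) As there are $|\wid(\F)|$ strata, the cover $\{S_\alpha:\alpha<\wid(\F)\}$ witnesses $\dec(\F)\le|\wid(\F)|$, and together with the already noted $|\wid(\F)|\le hl(X)$ this gives the chain $\dec(\F)\le|\wid(\F)|\le hl(X)$.

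For the parenthetical bound $\dec_{cl}(\F)\le hl(X)$ I would refine each stratum into pieces that are closed in $X$, and this is where regularity enters and where the main work lies. Fix $\alpha$ and recall that $S_\alpha$ is open in the closed subspace $D_\alpha(\F)$. Using regularity of $X$ (inherited by $D_\alpha(\F)$), for each point $x\in S_\alpha$ I would pick an open neighborhood $V_x$ of $x$ in $D_\alpha(\F)$ whose closure in $D_\alpha(\F)$ — which coincides with its closure $\bar V_x$ in $X$, since $D_\alpha(\F)$ is closed in $X$ — is still contained in $S_\alpha$. The sets $V_x$ form an open cover of the subspace $S_\alpha$, and $l(S_\alpha)\le hl(X)$, so I can extract a subcover of cardinality $\le hl(X)$. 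Its members' closures are then closed subsets $\bar V_x\subset S_\alpha$ of $X$ that cover $S_\alpha$, and on each of them every $f\in\F$ is continuous, being already continuous on the larger set $S_\alpha$.

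Finally I would take the union of these closed covers over all $\alpha<\wid(\F)$, producing a cover of $X$ by at most $|\wid(\F)|\cdot hl(X)\le hl(X)$ closed subsets on each of which every restriction $f|C$, $f\in\F$, is continuous; this yields $\dec_{cl}(\F)\le hl(X)$. The step I expect to be the crux is the regularity-based shrinking of neighborhoods inside a single stratum: it is precisely the device that turns the locally closed strata $S_\alpha$ (open in $D_\alpha(\F)$, but in general not closed in $X$) into genuinely closed pieces without inflating the cardinality past $hl(X)$, and it is the only place where the regularity hypothesis is used — which is why it appears in parentheses exactly alongside the $\dec_{cl}$ conclusion.
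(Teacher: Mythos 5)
Your argument is correct and follows essentially the same route as the paper: the strata $D_\alpha(\F)\setminus D_{\alpha+1}(\F)$ give $\dec(\F)\le|\wid(\F)|\le hl(X)$, and under regularity each stratum, being open in the closed set $D_\alpha(\F)$, is written as a union of at most $hl(X)$ closed subsets of $X$. The only difference is that you spell out the regularity-plus-Lindel\"of shrinking argument that the paper states in one line, which is a welcome (and correctly executed) elaboration rather than a deviation.
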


\begin{proof} The upper bound $\dec(\F)\le |\wid(\F)|\le hl(X)$ follows from the fact that for each ordinal $\alpha$ and each function $f\in\F$ the restriction $f|D_\alpha(\F)\setminus D_{\alpha+1}(\F)$ is continuous. If the space $X$ is regular, then each set $F=D_\alpha(\F)\setminus D_{\alpha+1}(\F)$, being a difference of two closed subsets of $X$, can be written as the union of $\le l(F)\le hl(X)$ many closed subsets of $X$, which implies that $\dec_{cl}(\F)\le|\wid(\F)|\cdot hl(X)=hl(X)$.
\end{proof}

Proposition~\ref{p4.2} combined with Propositions~\ref{p3.1} and \ref{p3.2} implies:

\begin{corollary}\label{c4.3} For any topological space $X$, each weakly discontinuous function family $\F\subset \IR^X$ has network weight $$nw(\F)\le \dec(\F)\cdot nw(X)\le hl(X)\cdot nw(X)=nw(X).$$
\end{corollary}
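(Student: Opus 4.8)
The plan is to chain the two upper bounds already established and then reduce the resulting product of cardinal invariants by a standard inequality, so the corollary is really just bookkeeping on top of Propositions~\ref{p3.1} and \ref{p4.2}.

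First I would note that the hypothesis of weak discontinuity already places $\F$ inside $SC_p(X)$: for a non-empty $A\subset X$ the open dense set $U\subset A$ on which every restriction $f|U$ ($f\in\F$) is continuous furnishes, since $U$ is open in $A$, a point of continuity of each $f|A$, so the family is scatteredly continuous and in particular $\F\subset SC_p(X)$. This makes Proposition~\ref{p4.2} legitimately applicable. Proposition~\ref{p3.1}, which needs no continuity hypothesis, gives the first link $nw(\F)\le\dec(\F)\cdot nw(X)$, and Proposition~\ref{p4.2} supplies $\dec(\F)\le hl(X)$ (its non-regular half, so no separation axiom on $X$ is required); substituting yields $nw(\F)\le\dec(\F)\cdot nw(X)\le hl(X)\cdot nw(X)$. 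Here only Proposition~\ref{p3.1} is needed; Proposition~\ref{p3.2} is the tool for the sharper weight estimate of a \emph{compact} family and does not enter this network-weight bound.

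The single step that is more than substitution is the closing equality $hl(X)\cdot nw(X)=nw(X)$, which I would obtain from the standard fact $hl(X)\le nw(X)$. Fixing a network $\mathcal N$ of $X$ with $|\mathcal N|=nw(X)$, its trace on any subspace $Z\subset X$ is a network for $Z$, so $nw(Z)\le nw(X)$; and from a network of cardinality $\kappa$ one extracts, for each open cover, a subcover of size $\le\kappa$ (choose for each point a member of the cover and then a network element squeezed between the point and that member), so $l(Z)\le nw(Z)$. Taking the supremum over $Z\subset X$ gives $hl(X)\le nw(X)$, whence for infinite $nw(X)$ the product equals the larger factor $nw(X)$; the finite case forces $X$ to be finite and is trivial. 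Beyond applying Proposition~\ref{p4.2} to a family that a priori lives in $\IR^X$ rather than in $SC_p(X)$---which is exactly why the first observation is needed---there is no real obstacle.
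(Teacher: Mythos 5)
Your proof is correct and follows essentially the same route as the paper, which derives the corollary by combining Proposition~\ref{p3.1} with the bound $\dec(\F)\le hl(X)$ from Proposition~\ref{p4.2} and the standard inequality $hl(X)\le nw(X)$. Your additional observation that weak discontinuity places $\F$ inside $SC_p(X)$, so that Proposition~\ref{p4.2} applies, is a worthwhile explicit justification of a step the paper leaves implicit.
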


\begin{corollary} For any regular topological space $X$, each weakly discontinuous compact subspace $\F\subset \IR^X$ has weight $$w(\F)\le \dec_{cl}(\F)\cdot h_{cl}kd(X)\cdot s(X)\le hl(X)\cdot h_{cl}kd(X).$$
\end{corollary}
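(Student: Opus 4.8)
The plan is to obtain both inequalities by feeding the decomposition bound of Proposition~\ref{p4.2} into the weight estimate of Proposition~\ref{p3.2} and then comparing cardinal invariants. The first inequality $w(\F)\le \dec_{cl}(\F)\cdot h_{cl}kd(X)\cdot s(X)$ is nothing but Proposition~\ref{p3.2} applied to the compact family $\F\subset\IR^X$; it requires only the compactness of $\F$ and no further hypothesis.

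For the second inequality it suffices to prove $\dec_{cl}(\F)\cdot s(X)\le hl(X)$, after which multiplication by $h_{cl}kd(X)$ finishes the argument. To bound $\dec_{cl}(\F)$ I would invoke Proposition~\ref{p4.2}; its hypotheses hold because $X$ is regular and the weakly discontinuous family $\F$ consists of weakly discontinuous, hence scatteredly continuous, functions, so that $\F\subset SC_p(X)$. This gives $\dec_{cl}(\F)\le hl(X)$. To bound the spread I would use the characterization $s(X)=\sup\{|D|:D\subset X\text{ discrete}\}$: every infinite discrete subspace $D$ satisfies $l(D)=|D|\le hl(X)$, whence $s(X)\le hl(X)$.

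Since both factors are dominated by the infinite cardinal $hl(X)$, the elementary identity $\kappa\cdot\kappa=\kappa$ yields $\dec_{cl}(\F)\cdot s(X)\le hl(X)\cdot hl(X)=hl(X)$, and therefore $\dec_{cl}(\F)\cdot h_{cl}kd(X)\cdot s(X)\le hl(X)\cdot h_{cl}kd(X)$, as desired.

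I expect no genuine obstacle here, since the statement is essentially a bookkeeping combination of two propositions already established. The only points requiring care are verifying that $\F$ really lands in $SC_p(X)$ so that Proposition~\ref{p4.2} is applicable, and keeping track of the convention that the cardinal invariants involved are infinite when carrying out the final arithmetic.
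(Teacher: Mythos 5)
Your proof is correct and follows exactly the paper's intended route: the paper derives this corollary by combining Proposition~\ref{p3.2} (for the first inequality) with Proposition~\ref{p4.2} and the standard estimate $s(X)\le hl(X)$ (for the second). Your care in noting that a weakly discontinuous family automatically lies in $SC_p(X)$, so that Proposition~\ref{p4.2} applies, is exactly the right check.
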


\begin{corollary} For any compact Hausdorff space $X$, each weakly discontinuous compact subspace $\F\subset \IR^X$ has weight $w(\F)\le hl(X)$.
\end{corollary}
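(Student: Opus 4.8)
The plan is to specialize the preceding corollary to the compact case. A compact Hausdorff space is regular (in fact normal), so the hypothesis of the previous corollary is satisfied, and it yields $w(\F)\le hl(X)\cdot h_{cl}kd(X)$. Thus the whole task reduces to showing that the second factor collapses.

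To that end I would invoke the observation recorded after the list of cardinal invariants, namely that every compact space $X$ satisfies $kd(X)=h_{cl}kd(X)=1$. Concretely, the one-element family $\{X\}$ is a cover of $X$ by a compact set with dense union, so $kd(X)=1$; and since every closed subspace $F$ of a compact space is again compact, the same argument gives $kd(F)=1$ for each closed $F\subset X$, whence the supremum defining $h_{cl}kd(X)$ equals $1$. Substituting $h_{cl}kd(X)=1$ into the bound $w(\F)\le hl(X)\cdot h_{cl}kd(X)$ immediately gives $w(\F)\le hl(X)$, as claimed.

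Alternatively, one can argue directly through Corollary~\ref{c3.3}: since $X$ is compact, that corollary gives $w(\F)\le\dec_{cl}(\F)\cdot s(X)$; by Proposition~\ref{p4.2} (applicable because $X$ is regular) we have $\dec_{cl}(\F)\le hl(X)$, and the standard inequality $s(X)\le hl(X)$ (an infinite discrete subspace $D\subset X$ has $l(D)=|D|$, so it is witnessed inside the hereditary Lindel\"of number) then yields $w(\F)\le hl(X)\cdot hl(X)=hl(X)$, using that $hl(X)$ is an infinite cardinal.

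There is essentially no genuine obstacle here: the statement is a direct specialization of the previous corollary in which the factor $h_{cl}kd(X)$ disappears by compactness. The only points requiring (routine) care are that compactness is inherited by closed subspaces, so that $h_{cl}kd(X)=1$, and the idempotent cardinal arithmetic $hl(X)\cdot hl(X)=hl(X)$ for infinite $hl(X)$.
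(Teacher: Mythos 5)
Your proof is correct and follows exactly the route the paper intends: the corollary is a direct specialization of the preceding one, using the observation from Section~2 that compact spaces satisfy $h_{cl}kd(X)=1$ (and, in the alternative route, the standard facts $\dec_{cl}(\F)\le hl(X)$, $s(X)\le hl(X)$, and idempotence of infinite cardinal multiplication). Nothing is missing.
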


\section{$\infty$-Convex sets in linear topological spaces}\label{s:convex}

In this section we shall establish some basic properties of  $\infty$-convex and $\sigma$-convex sets in linear topological spaces. Let us recall that a subset $B$ of a linear topological space $L$ is {\em $\sigma$-convex} (resp. {\em $\infty$-convex}) if for each (bounded) sequence $(x_n)_{n\in\IN}$ of points of the set $B$ and each sequence $(t_n)_{n\in\IN}$ of non-negative real numbers with $\sum_{n=1}^\infty t_n=1$ the series $\sum_{n=1}^\infty t_nx_n$ converges to some point of the set $B$.

\begin{proposition}\label{p5.1} A subset $B$ of a  linear topological space $X$ is $\sigma$-convex if and only if $B$ is bounded and $\infty$-convex.
\end{proposition}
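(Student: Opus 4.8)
The plan is to prove the two implications separately, with the backward direction being essentially immediate and the forward direction requiring the real work. For the easy direction, suppose $B$ is bounded and $\infty$-convex. I would simply observe that $\sigma$-convexity is \emph{by definition} the assertion that convex series $\sum_{n=1}^\infty t_n x_n$ (with $t_n\ge 0$, $\sum t_n=1$) of points $x_n\in B$ converge to a point of $B$, \emph{without} any boundedness hypothesis on $(x_n)$. But if $B$ itself is bounded, then every sequence $(x_n)_{n\in\IN}$ in $B$ is automatically a bounded sequence; hence the $\infty$-convexity hypothesis applies to it and delivers exactly the required convergence to a point of $B$. So boundedness $+$ $\infty$-convexity $\Rightarrow$ $\sigma$-convexity is a one-line matching of definitions.

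For the forward direction, suppose $B$ is $\sigma$-convex; I must show $B$ is bounded and $\infty$-convex. The $\infty$-convexity is again a definitional containment: any \emph{bounded} sequence in $B$ is in particular a sequence in $B$, so the $\sigma$-convexity hypothesis already guarantees that its convex series converge into $B$. Thus $\sigma$-convex $\Rightarrow$ $\infty$-convex for free. The genuine content is to deduce that $B$ is \textbf{bounded}. This is the step I expect to be the main obstacle, since boundedness in an arbitrary linear topological space is a statement about all neighborhoods of zero, and $\sigma$-convexity only gives convergence of convex series.

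The strategy for boundedness is to argue by contradiction. Suppose $B$ is not bounded. Then there is a neighborhood $U$ of $0$ in $L$ witnessing the failure: for every $n\in\IN$ we have $B\not\subset nU$, so we may pick points $x_n\in B$ with $x_n\notin nU$, i.e.\ $x_n$ escapes every dilate of $U$ faster than linearly. The idea is to build a convex combination that cannot converge. Shrinking $U$ to a balanced neighborhood $V$ with $V+V\subset U$ (using that balanced neighborhoods form a base at $0$), I would choose a sparse subsequence along which the $x_n$ grow very fast relative to the scalars $t_n$, and form $\sum_n t_n x_n$ with, say, $t_n$ chosen so that each tail term $t_n x_n$ is forced to lie outside $V$ infinitely often. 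The point is that if $\sum t_n x_n$ converged to some $s\in B\subset L$, then the partial sums would be Cauchy, forcing the individual weighted terms $t_n x_n\to 0$; but the unboundedness lets me arrange $t_n x_n\notin V$ for infinitely many $n$, contradicting $t_n x_n\to 0$. Concretely, given $x_n\notin nU$, a natural choice is to pass to indices $n_k$ with $x_{n_k}\notin 2^k U$ and set the weights to be a fixed summable-to-one sequence scaled so that each $t_{n_k}x_{n_k}$ stays outside a fixed neighborhood of zero; then $\sigma$-convexity would assert convergence, yet the terms fail to tend to zero. The delicate part is coordinating the scalars $t_n$ (which must be nonnegative and sum to $1$) with the rate of escape of the $x_n$ so that the term-test obstruction is genuinely triggered in a general, possibly non-metrizable, linear topological space; I would lean on the balancedness of $V$ and the additive structure of neighborhood bases at $0$ to make the ``terms do not go to zero'' conclusion rigorous.
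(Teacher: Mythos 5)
Your proposal is correct and follows essentially the same route as the paper: both trivial implications are definition-matching, and boundedness of a $\sigma$-convex set is proved by contradiction by picking $x_n\in B\setminus 2^nU$ and weights $t_n=2^{-n}$ (summing to $1$), so that the consecutive partial-sum differences $2^{-n}x_n$ stay outside $U$ and the series cannot converge. The extra machinery you anticipate (balanced neighborhoods, $V+V\subset U$, passing to a subsequence) is not needed, since the term test already follows from continuity of subtraction applied to the convergent sequence of partial sums.
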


\begin{proof} To prove this theorem, it suffices to check that each $\sigma$-convex subset $B\subset L$ is bounded. Assuming the converse, we can find a neighborhood $U\subset L$ of zero such that $B\not\subset tU$ for any positive real number  $t$. Then, for every $n\in\IN$ we can choose a point $x_n\in B\setminus 2^nU$ and observe that the series $\sum_{n=1}^\infty 2^{-n}x_n$ is divergent since $$\sum_{n=1}^{m}2^{-n}x_n-\sum_{n=1}^{m-1}2^{-n}x_n=2^{-m}x_m\notin U$$ for all $m\in\IN$.
\end{proof}

A subset $B$ of a linear topological space $L$ is called {\em sequentially complete} if each Cauchy
sequence in $B$ is convergent.

\begin{proposition} A convex subset $C$ of a locally convex linear topological space $L$ is $\infty$-convex if each closed bounded convex subset of $C$ is sequentially complete.
\end{proposition}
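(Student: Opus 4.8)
The plan is to take an arbitrary bounded sequence $(x_n)_{n\in\IN}$ in $C$ together with non-negative reals $(t_n)_{n\in\IN}$ satisfying $\sum_{n=1}^\infty t_n=1$, and to show that the partial sums $s_m=\sum_{n=1}^m t_nx_n$ converge to a point of $C$.

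First I would check that $(s_m)_{m\in\IN}$ is Cauchy. Fix a neighborhood $V\subset L$ of zero and, using the local convexity of $L$, choose a convex neighborhood $U\subset V$ of zero; since $0\in U$ and $U$ is convex we have $\lambda U\subset U$ for all $\lambda\in[0,1]$. Boundedness of $\{x_n:n\in\IN\}$ yields $\lambda>0$ with $x_n\in\lambda U$ for all $n$. For $m>k$, writing $\sigma=\sum_{n=k+1}^m t_n$, the difference $s_m-s_k=\sum_{n=k+1}^m t_nx_n=\sigma\cdot\sum_{n=k+1}^m(t_n/\sigma)x_n$ is $\sigma$ times a convex combination of points of $\lambda U$, hence lies in $\sigma\lambda U$. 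As the series $\sum_n t_n$ converges, its tails $\sum_{n>k}t_n$ tend to zero, so for all large $k$ we get $\sigma\lambda\le 1$ and therefore $s_m-s_k\in U\subset V$. Thus $(s_m)$ is Cauchy.

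Next I would replace the partial sums by genuine convex combinations inside $C$. Setting $T_m=\sum_{n=1}^m t_n$ and $\hat s_m=s_m+(1-T_m)x_1$, the point $\hat s_m$ is a convex combination of $x_1,\dots,x_m\in C$ with non-negative weights $t_1,\dots,t_m,1-T_m$ summing to $1$, so $\hat s_m\in C$ by convexity of $C$; moreover $\hat s_m-s_m=(1-T_m)x_1\to 0$, whence $(\hat s_m)$ is again Cauchy and shares the limit behaviour of $(s_m)$. All the $\hat s_m$ lie in $K=\overline{\mathrm{conv}}\{x_n:n\in\IN\}$, which is closed, convex and, as the closed convex hull of a bounded set in the locally convex space $L$, bounded. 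Consequently $S=K\cap C$ is a convex, bounded subset of $C$ that is closed in $C$ and contains the whole Cauchy sequence $(\hat s_m)$. Invoking the hypothesis, $S$ is sequentially complete, so $(\hat s_m)$ converges to some $s\in S\subset C$; then $s_m=\hat s_m-(1-T_m)x_1\to s$, i.e. $\sum_{n=1}^\infty t_nx_n=s\in C$, which is exactly the requirement of $\infty$-convexity.

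The step I expect to be delicate is precisely the passage to a closed bounded convex subset of $C$ to which the completeness assumption can be applied. The partial sums $s_m$ need not lie in $C$ at all, and the limit of the convex combinations $\hat s_m$ may land on the topological boundary of $C$, so no subset that is closed in $L$ can be expected to contain the sequence. The resolution is to work with the honest convex combinations $\hat s_m\in C$ and to intersect the closed convex hull $K$ with $C$: this produces a set closed in the subspace $C$, still bounded and convex, for which the hypothesis guarantees that the Cauchy sequence converges within it, and hence within $C$.
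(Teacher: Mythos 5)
Your proof is correct and follows essentially the same route as the paper's: the auxiliary points $\hat s_m=s_m+(1-T_m)x_1$ are exactly the paper's convex combinations $y_m$, your set $S=K\cap C$ is the paper's closed convex hull of $\{x_n\}$ taken in $C$, and the Cauchy estimate via the tail $\sum_{n>k}t_n$ and boundedness of the $x_n$ is the same. The only cosmetic difference is that you verify the Cauchy condition for the partial sums $s_m$ first and transfer it to $\hat s_m$, whereas the paper estimates $y_m-y_n$ directly.
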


\begin{proof} Given a bounded sequence $(x_n)_{n\in\IN}$ in $C$ and a sequence $(t_n)_{n\in\IN}$ of non-negative real numbers with $\sum_{n=1}^\infty t_n=1$ we need to check that the series $\sum_{n=1}^\infty t_nx_n$ converges in $C$. Let $B$ be the closed convex hull of the bounded set $\{x_n\}_{n\in\IN}$ in $C$. The local convexity of $L$ implies that $B$ is bounded in $L$, and by our assumption, $B$ is sequentially complete.

For every $n\in\IN$ consider the point $y_n=(1-\sum_{k=1}^nt_k)x_1+\sum_{k=1}^nt_kx_k\in B$. We claim  that the sequence $(y_n)_{n=1}^\infty$ is Cauchy. Given a symmetric neighborhood $U=-U$ of zero in $L$, we need to find $N\in\IN$ such that $y_m-y_n\in U$ for all $m\ge n\ge N$. By the local convexity of the space $L$, there is an open convex neighborhood $V=-V$ of zero in $L$ such that $V+V\subset U$.

Since the set $B$ is bounded, there is a positive real number $\e>0$ so small that $\e\cdot B\subset V$. Then $[0,\e]\cdot B\subset V$ by the convexity of $V\ni 0$. Since $\sum_{k=1}^\infty t_k=1<\infty$, there is a number $N\in\IN$ such that $\sum_{k=N}^\infty t_k<\e$.

We claim that $y_m-y_n\in U$ for every $m\ge n\ge N$. Observe that
$y_m-y_n=-\sum_{k=n+1}^mt_kx_1+\sum_{k=n+1}^mt_kx_k$. If $\sum_{k=n+1}^mt_k=0$, then $y_m-y_n=0$ and we are done. So, assume that $s=\sum_{k=n+1}^mt_k>0$ and put $s_k=t_k/s$ for $n<k\le m$. Observe that
$s=\sum_{k=n+1}^mt_k\le\sum_{k=N}^\infty t_k<\e$ and
the point
$$\sum_{k=n+1}^mt_kx_k=s\sum_{k=n+1}^ms_kx_k$$ belongs to $s\cdot B$ by the convexity of the set $B$.
Then $y_m-y_n=-sx_1+\sum_{k=n+1}^mt_kx_k\in -sB+sB\subset -[0,\e]B+[0,\e]B\subset -V+V\subset U$. So, the sequence $(y_k)_{k=1}^\infty$ is Cauchy and by the sequential completeness of $B$,
this sequence converges to some point $b\in B$, equal to the sum of the series $\sum_{k=1}^\infty t_kx_k$.
\end{proof}

\begin{corollary} Each countably compact convex subset of a locally convex linear topological space is sequentially complete and $\sigma$-convex.
\end{corollary}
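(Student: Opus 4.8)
The plan is to establish, in turn, three properties of a countably compact convex subset $C$ of a locally convex linear topological space $L$: that it is sequentially complete, that it is bounded, and that it is $\infty$-convex; the $\sigma$-convexity then follows at once from Proposition~\ref{p5.1}. The one elementary tool I would isolate at the outset is the fact, valid in any linear topological space, that a Cauchy sequence possessing a cluster point must converge to it. Indeed, if $(x_n)$ is Cauchy and $x$ is a cluster point, then given a neighborhood $W\ni 0$ I choose a symmetric neighborhood $V$ with $V+V\subset W$, use the Cauchy condition to fix $N$ with $x_m-x_n\in V$ for all $m,n\ge N$, and use the cluster property to find a single index $n_0\ge N$ with $x_{n_0}-x\in V$; then $x_m-x\in V+V\subset W$ for every $m\ge N$, so $x_m\to x$.

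For sequential completeness I would take an arbitrary Cauchy sequence in $C$ and invoke countable compactness: every sequence in a countably compact space has a cluster point in the space, since that point lies in the non-empty set $\bigcap_n\mathrm{cl}_C\{x_k:k\ge n\}$, whose non-emptiness is the finite-intersection form of countable compactness. By the lemma above the sequence converges in $C$, so $C$ is sequentially complete. The very same argument applies to any closed subset of $C$, because a closed subspace of a countably compact space is again countably compact; hence in particular every closed bounded convex subset of $C$ is sequentially complete.

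To get boundedness I would fix an open balanced neighborhood $U\ni 0$ in $L$. By continuity of scalar multiplication $\{nU:n\in\IN\}$ is an increasing open cover of $L$, hence of $C$, and it is countable, so countable compactness produces a finite subcover and, by monotonicity, a single member $NU\supset C$; thus $C$ is bounded in $L$. Now the preceding proposition (the $\infty$-convexity criterion for convex subsets of a locally convex space) applies, since every closed bounded convex subset of $C$ was just shown to be sequentially complete, and therefore $C$ is $\infty$-convex. Being in addition bounded, $C$ is $\sigma$-convex by Proposition~\ref{p5.1}.

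I do not anticipate a genuine obstacle here, as everything reduces to the interaction of countable compactness with the uniform structure of $L$. The only points deserving care are the passage from a cluster point to actual convergence of a Cauchy sequence, and the bookkeeping guaranteeing that the cluster point lies in $C$ --- which amounts to taking closures within the subspace $C$ rather than in $L$. Once these are pinned down, the two quoted propositions assemble the conclusion directly.
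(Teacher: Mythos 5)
Your proof is correct and follows exactly the route the paper intends (the corollary is stated there without an explicit proof): countable compactness gives boundedness and sequential completeness of every closed subset, so the preceding proposition yields $\infty$-convexity and Proposition~\ref{p5.1} upgrades this to $\sigma$-convexity. The auxiliary facts you isolate --- a Cauchy sequence with a cluster point converges to it, closed subspaces of countably compact spaces are countably compact, and the increasing countable cover $\{nU\}_{n\in\IN}$ by multiples of a balanced neighborhood forces boundedness --- are all verified correctly.
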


\section{$\infty$-Convex subsets in function spaces $SC_p^*(X)$}

The following theorem combined with Corollary~\ref{c4.3} implies Theorem~\ref{t1.1} announced in the Introduction and generalizes Theorem 2 of \cite{BBK}.

\begin{theorem}\label{t6.1} If $X$ is a topological space with countable tightness, then each $\infty$-continuous subset $\F\subset SC_p^*(X)$ is weakly discontinuous.
\end{theorem}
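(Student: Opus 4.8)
The plan is to argue by contradiction, reducing everything to a single badly behaved convex series. By Proposition~\ref{p4.1} it suffices to prove that the diagonal map $\Delta\F:X\to\IR^\F$ is scatteredly continuous, i.e. that for every non-empty $A\subset X$ the restriction $\Delta\F|A$ has a point of continuity. So I assume, towards a contradiction, that some non-empty $A\subset X$ is such that $\Delta\F|A$ is \emph{nowhere} continuous; equivalently, for every $a\in A$ there are $f_a\in\F$ and $\e_a>0$ with $a\in\overline{\{x\in A:|f_a(x)-f_a(a)|\ge\e_a\}}$.

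Next I would remove the a priori unboundedness of $\F$. Fixing $f_0\in\F$ and replacing $\F$ by $\F-f_0$, I may assume $0\in\F$: translation by the single (weakly discontinuous) function $f_0$ preserves $\infty$-convexity, keeps $\F$ inside $SC_p^*(X)$, and does not affect weak discontinuity, since the intersection of the dense open continuity sets of $\F-f_0$ and of $f_0$ witnesses weak discontinuity of $\F$. As an $\infty$-convex set is convex, $0\in\F$ gives $tf\in\F$ for all $f\in\F$ and $t\in[0,1]$. Since each $f\in\F\subset SC_p^*(X)$ is bounded, for any sequence $(f_n)\subset\F$ the functions $\tilde f_n:=f_n/(1+\|f_n\|_\infty)\in\F$ form a uniformly bounded sequence in $\F$ (this is the potential boundedness of $SC_p^*(X)$). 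For such a sequence and any weights $(r_n)$ with $r_n\ge 0$ and $\sum_n r_n=1$, the series $\sum_n r_n\tilde f_n$ converges pointwise, and by $\infty$-convexity its sum $g$ lies in $\F\subset SC_p^*(X)$ and is therefore scatteredly continuous. Hence it suffices to produce one uniformly bounded sequence in $\F$ and one system of convex weights whose sum $g$ is \emph{not} scatteredly continuous.

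Such a $g$ would be manufactured by a recursive construction. Using the witnesses $f_a$ and countable tightness, I would build a tree of points $(a_s)_s\subset A$ (indexed by finite sequences), together with witnesses $f_s\in\F$ and jumps $\e_s>0$, so that the children of each node accumulate to it from within the jump set $\{x:|f_s(x)-f_s(a_s)|\ge\e_s\}$; here countable tightness converts ``every neighbourhood of $a_s$ contains a jump point'' into an actual sequence of jump points converging to $a_s$. Rescaling the $f_s$ to $\tilde f_s$ as above and choosing the convex weights $r_s$ to decay fast enough that at each node the surviving jump $r_s\e_s/(1+\|f_s\|_\infty)$ dominates the total variation contributed by all other terms, I would obtain $g=\sum_s r_s\tilde f_s$ with $|g(a_{s^\frown k})-g(a_s)|$ bounded below along the children. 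Then $g$ is discontinuous at every point of the countable, dense-in-itself set $S=\{a_s\}$ relative to $S$, so $g|S$ has no point of continuity and $g\notin SC_p^*(X)$, which is the desired contradiction.

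The hard part is the bookkeeping in this last step, where two interacting requirements must be met simultaneously. The jump introduced at a node must survive in the limit, which forces me to kill the contribution of the finitely many earlier witnesses by selecting each new point inside their common continuity region (available because each $f_s$, being scatteredly continuous into the regular space $\IR$, is weakly discontinuous and hence has nowhere dense discontinuity set); but at the same time the selected children must still accumulate to their parent from within the jump set. Guaranteeing that deleting the earlier nowhere dense discontinuity sets does not destroy this accumulation, so that countable tightness can still extract a converging sequence of admissible jump points, is the delicate point on which the whole argument turns.
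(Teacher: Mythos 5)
Your overall strategy --- manufacture a convex series of uniformly bounded members of $\F$ whose sum is nowhere continuous on a countable subset, contradicting $g\in\F\subset SC_p(X)$ --- is the same as the paper's, and your normalization $\tilde f_n=f_n/(1+\|f_n\|_\infty)$ plays the role of the paper's $f_n=(1-\lambda_n)f_0+\lambda_n g_n$. But the construction you sketch has a genuine gap, precisely at the point you yourself flag. You want each new point $a_{s^\frown k}$ to lie in the jump set $J_s=\{x:|f_s(x)-f_s(a_s)|\ge\e_s\}$ \emph{and} in the common continuity region of the finitely many earlier witnesses, and you need such points still to accumulate at $a_s$. There is no reason this is possible: $a_s\in\overline{J_s}$, but $J_s$ need not be open and can itself be nowhere dense (for instance a set accumulating only at $a_s$), so it may be entirely contained in the nowhere dense discontinuity set of a single earlier witness $f_{s'}$; removing that set then leaves nothing accumulating at $a_s$. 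Nor can you shrink to a small neighbourhood of $a_s$ where the earlier witnesses vary little, since $a_s$ was chosen with no control over those functions and need not be a continuity point of any of them. (A smaller inaccuracy: countable tightness yields a countable subset of $J_s$ with $a_s$ in its closure, not a sequence \emph{converging} to $a_s$; accumulation suffices for your purposes, but the convergence claim is unjustified.)

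The paper closes exactly this gap by a different device. It first uses countable tightness (via Proposition~2.3 of \cite{BB}) to reduce to a fixed countable set $Q$ on which $\Delta\F$ is nowhere continuous, passes to a dense open $U\subset Q$ on which a single $f_0$ is continuous, enumerates $U=\{x_n\}$, and takes one witness per point. Then, instead of trying to make the jump of the newly added summand survive at predetermined jump points (which is what forces you to control the earlier functions at the new points), it ensures by a dichotomy in the choice of the coefficient --- take either $\tilde t_{n+1}$ or $\tfrac12\tilde t_{n+1}$ --- that the \emph{partial sum} $s_{n+1}$ is discontinuous at $x_{n+1}$: if $s_n+\tilde t_{n+1}f_{n+1}$ is continuous at $x_{n+1}$, then $s_n+\tfrac12\tilde t_{n+1}f_{n+1}$ is not, because $f_{n+1}$ is discontinuous there. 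After that only the tail $\sum_{k>n}t_kf_k$ has to be controlled, and its norm is made at most $\tfrac14\osc_{x_n}(s_n|U)$, whence $\osc_{x_n}(s|U)>0$; continuity of earlier witnesses at later points is never needed. If you want to keep your tree-based setup you would have to import this dichotomy (or an equivalent oscillation argument on partial sums); as written, the step you call delicate is not merely delicate but fails in general.
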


\begin{proof} By Proposition~\ref{p4.1}, the weak discontinuity of
the function family $\F$ is equivalent to the weak discontinuity
of the function $\Delta\F:X\to\IR^\F$, $\Delta\F:x\mapsto(f(x))_{f\in\F}$.
Since the space $X$ has countable tightness, the weak discontinuity
of $\Delta\F$ will follow from Proposition~2.3 of
\cite{BB} as soon as we check that  for each countable subset
$Q\subset X$ the restriction
$\Delta\F|Q:Q\to\IR^\F$ has a continuity point.

Assume conversely that the function $\Delta\F|Q$ is everywhere discontinuous.
Choose any function $f_0\in \F$ and find an open dense subset $U\subset Q$ such that $f_0|U$ is continuous. Let $U=\{x_n:n\in\IN\}$ be an enumeration of the countable set $U$.

Since the function $\Delta\F|U$ is everywhere discontinuous,
for each point $x_n\in U$ we can choose a function $g_n\in
\F$ such that the restriction $g_n|U$ is discontinuous at $x_n$. Since the function $g_n$ has bounded norm $\|g_n\|=\sup_{x\in X}|g_n(x)|$, we can choose a positive real number $\lambda_n\in(0,1]$ such that the function
$f_n=(1-\lambda_n)f_0+\lambda_ng_n$ has norm $\|f_n\|\le \|f_0\|+1$.
The continuity of $f_0|U$ and discontinuity of $g_n|U$ at $x_n$ imply that the function $f_n|U$ is discontinuous at $x_n$.

Observe that a function $f:U\to\IR$ is discontinuous at a point
$a\in U$ if and only if it has strictly positive oscillation
$$\osc_a(f)=\inf_{O_a}\sup\{|f(x)-f(y)|:x,y\in O_a\}$$at the point $a$.
In this definition the infimum is taken over all neighborhoods $O_a$
of $a$ in $U$.

We shall inductively construct a sequence $(t_n)_{n=1}^\infty$ of
positive real numbers such that for every $n\in\IN$ the following
conditions are satisfied:
\begin{enumerate}
\item[1)] $t_1\le \frac12$, $t_{n+1}\le \frac12t_n$, and $t_{n+1}\cdot\|f_{n+1}\|\le \frac12t_n\cdot\|f_n\|$,
\item[2)] the function $s_n=\displaystyle\sum_{k=1}^nt_kf_k$ restricted to $U$ is discontinuous at $x_n$,
\item[3)] $t_{n+1}\cdot\|f_{n+1}\|\le\frac18\osc_{x_n}(s_n|U)$.
\end{enumerate}

We start the inductive construction letting $t_1=1/2$. Then the
function $s_1|U=t_1\cdot f_1|U$ is discontinuous at $x_1$ by the
choice of the function $f_1$. Now assume that for some $n\in\IN$
positive numbers $t_1\dots,t_n$ has been chosen so that the function
$s_n=\displaystyle\sum_{k=1}^nt_kf_k$ restricted to $U$ is
discontinuous at $x_n$.

Choose any positive number $\tilde t_{n+1}$ such that
$$\tilde t_{n+1}\le \frac12t_n,\;\;\tilde t_{n+1}\cdot\|f_{n+1}\|\le\tfrac
12t_n\cdot\|f_{n}\|\mbox{ \ and \ }  \tilde
t_{n+1}\cdot\|f_{n+1}\|\le\tfrac18\osc_{x_n}(s_n|U),$$ and consider
the function $\tilde s_{n+1}=s_n+\tilde t_{n+1}f_{n+1}$. If the
restriction of this function to $U$ is discontinuous at the point
$x_{n+1}$, then put $t_{n+1}=\tilde t_{n+1}$ and finish the
inductive step. If $\tilde s_{n+1}|U$ is continuous at $x_{n+1}$,
then put $t_{n+1}=\frac12\tilde t_{n+1}$ and observe that the
restriction of the function
$$s_{n+1}=\displaystyle\sum_{k=1}^{n+1} t_kf_k=s_n+\tfrac12\tilde t_{n+1}f_{n+1}=
\tilde s_{n+1}-\tfrac12\tilde t_{n+1}f_{n+1}$$ to $U$ is discontinuous
at $x_{n+1}$. This completes the inductive construction.
\smallskip

The condition (1) guarantees that  $\displaystyle\sum_{n=1}^\infty
t_n\le  1$ and hence the number
$t_0=1-\displaystyle\sum_{n=1}^\infty t_n$ is non-negative. So, we can consider the function
$$s=\displaystyle\sum_{n=0}^\infty t_nf_n$$ which is well-defined and belongs to
$\F$ by the $\infty$-convexity of $\F$.

We claim that the function $s\in\F\subset SC_p(X)$ has everywhere discontinuous restriction $s|U$. Assume conversely that $s|U$ is continuous at some point  $x_n\in U$. Observe
that $$s=t_0f_0+s_n+\displaystyle\sum_{k=n+1}^\infty t_{k}f_k$$and
hence
$$s_n=s-t_0f_0-\displaystyle\sum_{k=n+1}^\infty t_kf_k=s-t_0f_0-F_n,$$
where $F_n=\displaystyle\sum_{k=n+1}^\infty t_kf_k$. The conditions
(1) and (3) of the inductive construction guarantee that the
function $F_n$
 has norm
$$\|F_n\|\le\displaystyle\sum_{k=n+1}^\infty t_k\|f_k\|\le 2 t_{n+1}\|f_{n+1}\|\le
\frac14\osc_{x_n}(s_n|U).$$
Since $s_n=s-t_0f_0-F_n$, the triangle inequality implies that
$$0<\osc_{x_n}(s_n|U)\le \osc_{x_n}(s|U)+\osc_{x_n}(t_0f_0|U)+
\osc_{x_n}(F_n)\le$$ $$\le 0+0+2\|F_n\|\le\frac12\osc_{x_n}(s_n|U).$$
This contradiction shows that $s|U$ is everywhere discontinuous, which is not possible as $s\in\F\subset SC_p(X)$ by the $\infty$-convexity of the family $\F$.
So, the family $\F$ is weakly discontinuous.
\end{proof}

Theorem~\ref{t6.1} combined with Propositions~\ref{p3.1}, \ref{p3.2} and \ref{p4.2} imply:

\begin{corollary} If $X$ is a (regular) topological space with countable tightness, then each  $\infty$-convex subset $\F\subset SC_p^*(X)$ has (closed) decomposition number ($\dec_{cl}(\F)\le hl(X)$) $\dec(\F)\le hl(X)$ and network weight $nw(\F)\le nw(X)$.
\end{corollary}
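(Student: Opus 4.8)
The plan is to assemble the statement from the results already in hand, letting Theorem~\ref{t6.1} carry all the genuine analytic content. First I would invoke Theorem~\ref{t6.1}: since $X$ has countable tightness and $\F\subset SC_p^*(X)$ is $\infty$-convex, the family $\F$ is weakly discontinuous. This is the only step that uses the countable-tightness hypothesis together with the $\infty$-convexity of $\F$; everything afterward is a purely formal consequence of weak discontinuity, via the decomposition-number machinery of Sections~3--4.

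Next I would pass to the decomposition numbers through Proposition~\ref{p4.2}. Applied to the weakly discontinuous family $\F\subset SC_p(X)$, it yields $\dec(\F)\le|\wid(\F)|\le hl(X)$; and when $X$ is additionally regular, the same proposition supplies the closed version $\dec_{cl}(\F)\le hl(X)$. This settles the decomposition-number claims in both the general case and the parenthetical regular case.

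Finally I would estimate the network weight using Proposition~\ref{p3.1}, which gives $nw(\F)\le\dec(\F)\cdot nw(X)$. Combining this with the bound $\dec(\F)\le hl(X)$ just obtained and the standard inequality $hl(X)\le nw(X)$, one concludes
$$nw(\F)\le\dec(\F)\cdot nw(X)\le hl(X)\cdot nw(X)=nw(X),$$
exactly as recorded in Corollary~\ref{c4.3}. Should one also want a weight bound for compact $\F$ (as in the companion corollaries of Section~4), Proposition~\ref{p3.2} would enter at this last step in place of Proposition~\ref{p3.1}, feeding the closed decomposition bound into the estimate involving $h_{cl}kd(X)$ and $s(X)$.

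I do not anticipate any real obstacle here, since all the delicate work lives inside Theorem~\ref{t6.1} (the inductive construction of the weights $t_n$ and the oscillation estimate). The remaining steps are formal citations plus the cardinal-arithmetic identity $hl(X)\cdot nw(X)=nw(X)$, which holds because $hl(X)\le nw(X)$ for every topological space; the one point worth stating explicitly is simply that $SC_p^*(X)\subset SC_p(X)\subset\IR^X$, so Propositions~\ref{p4.2} and \ref{p3.1} apply verbatim to $\F$.
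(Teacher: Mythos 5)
Your proposal is correct and follows exactly the route the paper intends: the corollary is stated as an immediate consequence of Theorem~\ref{t6.1} (weak discontinuity), Proposition~\ref{p4.2} (the bounds $\dec(\F)\le hl(X)$ and, for regular $X$, $\dec_{cl}(\F)\le hl(X)$), and Proposition~\ref{p3.1} together with $hl(X)\cdot nw(X)=nw(X)$, which is precisely the chain you assemble. Your remark that Proposition~\ref{p3.2} would only enter for the compact-family weight bounds is also consistent with how the paper uses it in the neighbouring corollaries.
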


\begin{corollary}  If $X$ is a regular topological space with countable tightness, then each compact  convex subset $\F\subset SC_p^*(X)$ has weight $w(\F)\le hl(X)\cdot h_{cl}kd(X)$.
\end{corollary}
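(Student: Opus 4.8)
The plan is to recognise the compact convex family $\F$ as an $\infty$-convex, weakly discontinuous subfamily of $\IR^X$ and then feed it into the weight estimates already established. First I would record that $\F$ lives in the locally convex linear topological space $SC_p^*(X)$, which is a linear subspace of the product $\IR^X$ carrying the topology of pointwise convergence. As recalled in the Introduction, every compact convex subset of a locally convex linear topological space is $\sigma$-convex; by Proposition~\ref{p5.1} a $\sigma$-convex set is in particular bounded and $\infty$-convex. Hence $\F$ is an $\infty$-convex subset of $SC_p^*(X)$, and this is the only place where the convexity and compactness hypotheses enter.

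Next, since $X$ has countable tightness, Theorem~\ref{t6.1} applies to the $\infty$-convex family $\F$ and shows that $\F$ is weakly discontinuous. Because $SC_p^*(X)$ carries the subspace topology inherited from $\IR^X$, the compact family $\F$ is simultaneously a compact subspace of $\IR^X$. Thus $\F$ is a weakly discontinuous compact subspace of $\IR^X$ over the regular space $X$, and I can invoke the weight bound for such families. Concretely, the regular version of Proposition~\ref{p4.2} gives $\dec_{cl}(\F)\le hl(X)$, and Proposition~\ref{p3.2} then yields
$$w(\F)\le \dec_{cl}(\F)\cdot h_{cl}kd(X)\cdot s(X)\le hl(X)\cdot h_{cl}kd(X)\cdot s(X).$$

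It remains to absorb the factor $s(X)$, and here I would use the standard inequality $s(X)\le hl(X)$: an infinite discrete subspace $D\subset X$ is covered minimally only by its own singletons, so $l(D)=|D|$, whence $|D|\le hl(X)$ and therefore $s(X)\le hl(X)$. Combining this with the idempotence $hl(X)\cdot hl(X)=hl(X)$ for the infinite cardinal $hl(X)$ collapses the bound to $w(\F)\le hl(X)\cdot h_{cl}kd(X)$, as required. I do not anticipate a genuine obstacle, since every nontrivial ingredient is already in hand; the only points demanding care are the passage from compact convexity to $\infty$-convexity through Proposition~\ref{p5.1} and the cardinal-arithmetic simplification $s(X)\le hl(X)$ together with the absorption of the resulting square of $hl(X)$.
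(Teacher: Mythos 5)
Your proof is correct and follows exactly the route the paper intends: the paper derives this corollary by combining Theorem~\ref{t6.1} (compact convex $\Rightarrow$ $\sigma$-convex $\Rightarrow$ $\infty$-convex $\Rightarrow$ weakly discontinuous) with Propositions~\ref{p4.2} and \ref{p3.2}, and you have simply written out those same steps, including the absorption $s(X)\le hl(X)$ that the paper leaves implicit.
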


\begin{corollary}  If $X$ is a compact Hausdorff topological space with countable tightness, then each compact  convex subset $\F\subset SC_p^*(X)$ has weight $w(\F)\le hl(X)$.
\end{corollary}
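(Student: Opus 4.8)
The plan is to run the same reduction that underlies the two preceding corollaries, and then specialise the cardinal estimate to the compact Hausdorff case. First I would record that $\F$, being a compact convex subset of the locally convex linear topological space $SC_p^*(X)$, is $\sigma$-convex (as noted in the Introduction for compact convex subsets of locally convex spaces), and is therefore $\infty$-convex by Proposition~\ref{p5.1}. Since $X$ has countable tightness, Theorem~\ref{t6.1} then applies directly and shows that the $\infty$-convex family $\F$ is weakly discontinuous. This is the one substantive input, and it is already established.

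Next I would view $\F$ as a compact subspace of $\IR^X$ in the topology of pointwise convergence, and use that $X$, being compact Hausdorff, is regular. Proposition~\ref{p4.2} then gives the bound $\dec_{cl}(\F)\le|\wid(\F)|\le hl(X)$ on the closed decomposition number, and Corollary~\ref{c3.3} (applicable since $X$ is compact) yields $w(\F)\le \dec_{cl}(\F)\cdot s(X)\le hl(X)\cdot s(X)$.

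Finally, to collapse the factor $s(X)$ I would invoke the elementary inequality $s(X)\le hl(X)$ (every discrete subspace $D\subset X$ satisfies $l(D)=|D|$, whence $hl(X)\ge s(X)$) together with the idempotence $hl(X)\cdot hl(X)=hl(X)$ of the infinite cardinal $hl(X)$; this gives $w(\F)\le hl(X)$, as required. Equivalently, since compact spaces have $h_{cl}kd(X)=1$ (noted in Section~2), the bound $w(\F)\le hl(X)\cdot h_{cl}kd(X)$ furnished by the preceding corollary for regular $X$ of countable tightness already reads $w(\F)\le hl(X)$. Either route shows that the statement is a straightforward specialisation of the earlier results. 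Accordingly I anticipate no genuine obstacle: the weak discontinuity of $\F$ is handed to us by Theorem~\ref{t6.1}, and the only remaining tasks are the routine observation that a compact convex set is $\infty$-convex and the cardinal bookkeeping $s(X)\le hl(X)$ and $h_{cl}kd(X)=1$ valid for any compact space.
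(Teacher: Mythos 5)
Your proposal is correct and follows essentially the same route the paper intends: compact convex $\Rightarrow$ $\sigma$-convex $\Rightarrow$ $\infty$-convex (Proposition~\ref{p5.1}), then Theorem~\ref{t6.1} for weak discontinuity, Proposition~\ref{p4.2} for $\dec_{cl}(\F)\le hl(X)$, and Proposition~\ref{p3.2} (equivalently Corollary~\ref{c3.3}) together with $h_{cl}kd(X)=1$, $s(X)\le hl(X)$ and idempotence of infinite cardinals. The paper states this corollary as an immediate consequence of exactly these results, so there is nothing to add.
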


\section{Potentially bounded sets}

In order to generalize Theorem~\ref{t6.1} to $\infty$-convex sets in
the function spaces $SC_p(X)$ we need to introduce and study the
notion of a potentially bounded set.

Let us recall that a subset $B$ of a linear topological space $L$ is {\em bounded} if for each neighborhood $U\subset L$ of zero there is a positive real number $r$ such that $B\subset rU$. Observe that a family of functions $\F\subset\IR^X$ is bounded if and only if for each point $x\in X$ the set
$\F(x)=\{f(x):f\in\F\}$ is bounded in the real line.

A subset $B$ of a linear topological space $L$ is {\em potentially bounded} if for each sequence $(x_n)_{n\in\w}$ of points of $B$ there is a sequence $(t_n)_{n\in\w}$ of positive real numbers such that the set $\{t_nx_n\}_{n\in\w}$ is bounded in $L$.

Observe that for any topological space $X$ the space $l_\infty(X)$
of bounded real-valued functions on $X$ is potentially bounded in
$\IR^X$. Consequently, the space $SC^*_p(X)$ of all bounded
scatteredly continuous functions also is potentially bounded in
$\IR^X$.

\begin{proposition}\label{p7.1} Let $X$ be a $\sigma$-compact topological space. Each function family $\F\subset\IR^X$ with $\dec_{cl}(\F)\le\aleph_0$ is potentially bounded in $\IR^X$.
\end{proposition}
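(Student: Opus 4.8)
The plan is to use $\sigma$-compactness to reduce the required pointwise boundedness to a uniform-boundedness condition over countably many compact ``pieces,'' and then to choose the scaling factors $t_n$ by a routine diagonal argument. First I would fix a countable cover $\{K_m\}_{m\in\w}$ of $X$ by compact sets (by $\sigma$-compactness) and a countable cover $\{C_j\}_{j\in\w}$ of $X$ by closed or finite sets witnessing $\dec_{cl}(\F)\le\aleph_0$, so that $f|C_j$ is continuous for every $f\in\F$ and every $j$. Intersecting, the countable family $\{K_m\cap C_j:m,j\in\w\}$ is again a cover of $X$; each set $K_m\cap C_j$ is finite or closed in the compact set $K_m$, and in either case compact. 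Re-enumerating these intersections as $(L_i)_{i\in\w}$, I obtain a countable cover of $X$ by compact sets with the crucial feature that every $f\in\F$, being continuous on the corresponding $C_j$, restricts to a continuous --- and therefore bounded --- function on each $L_i$.

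Now, given an arbitrary sequence $(f_n)_{n\in\w}$ in $\F$, I set $M_{i,n}=\sup_{x\in L_i}|f_n(x)|$, which is finite by the previous paragraph. The goal is to find positive reals $(t_n)_{n\in\w}$ with $\sup_n t_nM_{i,n}<\infty$ for every fixed $i$, since this secures uniform boundedness of $\{t_nf_n\}$ on each $L_i$. I would obtain them by the diagonal formula $t_n=1/\bigl(1+\max_{i\le n}M_{i,n}\bigr)$, which is a well-defined positive real number. For a fixed index $i$ and every $n\ge i$ we then have $t_nM_{i,n}\le M_{i,n}/(1+M_{i,n})<1$, so $\sup_n t_nM_{i,n}$ is finite, the finitely many terms with $n<i$ being unable to spoil this.

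Finally, to conclude that $\{t_nf_n\}_{n\in\w}$ is bounded in $\IR^X$ it suffices, by the pointwise characterization of boundedness recalled in this section, to check boundedness at each point. Any $x\in X$ lies in some $L_i$, whence $|t_nf_n(x)|\le t_nM_{i,n}$ for all $n$, and therefore $\sup_n|t_nf_n(x)|\le\sup_n t_nM_{i,n}<\infty$. This is exactly boundedness of the function family $\{t_nf_n\}$ in $\IR^X$, which establishes the potential boundedness of $\F$. The only genuine content of the argument is the reduction in the first paragraph --- recognizing that $\sigma$-compactness together with a countable closed decomposition renders each $f\in\F$ bounded on each of countably many compact pieces; once that is in place, the choice of $(t_n)$ is a standard diagonalization and presents no real obstacle.
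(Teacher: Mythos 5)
Your proposal is correct and follows essentially the same route as the paper: refine the closed decomposition to a countable cover by compact sets (the paper simply says one ``can additionally assume'' each piece is compact, which your intersection with the $K_m$ makes explicit), then choose $t_n$ by the same diagonal scaling so that $t_n\sup_{x\in L_i}|f_n(x)|<1$ for all $i\le n$, and conclude pointwise boundedness. No substantive difference.
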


\begin{proof} Since $\dec_{cl}(\F)\le\aleph_0$, there is a countable closed cover $\C$ of $X$ such that for each $C\in\C$ and $f\in\F$ the restriction $f|C$ is continuous. Since the space $X$ is $\sigma$-compact, we can additionally assume that each set $C\in\C$ is compact.

To show that the set $\F$ is potentially bounded in $\IR^X$, fix any sequence $(f_n)_{n\in\w}$ in $\F$. Let $\C=\{C_n:n\in\w\}$ be any enumeration of the family $\C$. For every $n\in\w$ choose a positive real number $t_n$ such that $t_n\cdot \max |f_n(C_0\cup\dots\cup C_n)|<1$. We claim that the set $\{t_nf_n\}_{n\in\w}$ is bounded in $\IR^X$.

Indeed, for each point $x\in X$ we can find $k\in\w$ with $x\in C_k$ and observe that $$|t_nf_n(x)|\le t_n\max|f_n(C_0\cup\dots\cup C_n)|\le 1$$ for every $n\ge k$, which implies that the set $\{t_nf_n(x)\}_{n\in\w}\subset \{t_nf_n(x)\}_{n=0}^k\cup[-1,1]$ is bounded.
\end{proof}

Propositions~\ref{p7.1} and \ref{p4.2} imply:

\begin{corollary}\label{c7.2} Let $X$ be a regular $\sigma$-compact topological space. Each weakly discontinuous function family $\F\subset SC_p(X)$ is potentially bounded in $\IR^X$.
\end{corollary}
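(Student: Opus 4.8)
The plan is to deduce the corollary directly from the two quoted propositions, as the phrase ``Propositions~\ref{p7.1} and~\ref{p4.2} imply'' suggests. Proposition~\ref{p7.1} already guarantees that on a $\sigma$-compact space \emph{every} function family $\F\subset\IR^X$ with $\dec_{cl}(\F)\le\aleph_0$ is potentially bounded. Hence the entire task reduces to a single cardinal estimate: to show that a weakly discontinuous family $\F\subset SC_p(X)$ over a regular $\sigma$-compact space $X$ satisfies $\dec_{cl}(\F)\le\aleph_0$, i.e.\ that $X$ admits a countable cover by closed (or finite) sets on each of which every $f\in\F$ is continuous. Once this is secured, Proposition~\ref{p7.1} yields the conclusion verbatim, so no further work with the defining sequences $(f_n)$ and scalars $(t_n)$ is needed.

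To control $\dec_{cl}(\F)$ I would invoke Proposition~\ref{p4.2}, which applies precisely because $X$ is regular and $\F$ is weakly discontinuous: it gives $\dec_{cl}(\F)\le hl(X)$, obtained by decomposing each continuity level $F_\alpha=D_\alpha(\F)\setminus D_{\alpha+1}(\F)$ of the resolution of $\F$ into $\le l(F_\alpha)$ closed subsets of $X$ and summing over the $|\wid(\F)|$ levels. Thus the target inequality $\dec_{cl}(\F)\le\aleph_0$ follows as soon as the Lindel\"of numbers $l(F_\alpha)$ of the levels and the length $\wid(\F)$ of the resolution are shown to be countable. Here the $\sigma$-compactness of $X$ is the tool I would use: writing $X=\bigcup_{m\in\w}K_m$ with each $K_m$ compact (and, by regularity, closed), each trace $D_\alpha(\F)\cap K_m$ is compact and each level $F_\alpha$ is exhausted by the countably many relatively compact pieces $F_\alpha\cap K_m$; restricting $\F$ to each $K_m$ and amalgamating the resulting compact continuity decompositions over $m\in\w$ is the mechanism for producing the single countable closed cover of $X$.

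The main obstacle is exactly this last passage, from ``each level is a countable union of relatively compact pieces'' to ``$\dec_{cl}(\F)\le\aleph_0$''. The difficulty is that the levels $F_\alpha$ are only relatively \emph{open} in $X$, not closed, so the quantity measured by the bound $\dec_{cl}(\F)\le hl(X)$ — the Lindel\"of numbers of these non-closed sets together with the transfinite length $\wid(\F)$ — does not reduce to the Lindel\"of number $l(X)\le\aleph_0$ supplied by $\sigma$-compactness alone. Forcing the whole tower of relatively open levels to collapse to a countable closed decomposition is the crux of the argument, and it is the one step I expect to require genuine care, exploiting the regularity and $\sigma$-compactness of $X$ \emph{in combination} rather than separately; everything surrounding it is a routine application of Propositions~\ref{p4.2} and~\ref{p7.1}.
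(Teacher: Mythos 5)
Your reduction is precisely the paper's own proof: the authors obtain the corollary by exactly the chain you describe, namely $\dec_{cl}(\F)\le hl(X)$ from Proposition~\ref{p4.2} followed by Proposition~\ref{p7.1}, and offer nothing beyond the two citations. However, the obstacle you flag at the end is not merely a step ``requiring genuine care'' --- it is a genuine gap, and under the stated hypotheses it cannot be closed. Regularity plus $\sigma$-compactness controls $l(X)$ but says nothing about $hl(X)$, and your fallback mechanism of restricting $\F$ to the compact pieces $K_m$ and amalgamating fails for the same reason: the hereditary Lindel\"of number of a compact Hausdorff space can be uncountable, the levels $D_\alpha(\F)\setminus D_{\alpha+1}(\F)$ are only relatively open, and the length $\wid(\F)$ of the resolution is itself bounded only by $hl(X)$.

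In fact the statement is false for merely regular $\sigma$-compact $X$. Let $D$ be the set $\w^\w$ with the discrete topology and let $X=D\cup\{\infty\}$ be its one-point compactification; then $X$ is compact Hausdorff, hence regular and $\sigma$-compact. For $n\in\w$ define $f_n:X\to\IR$ by $f_n(d)=d(n)$ for $d\in D$ and $f_n(\infty)=0$. Every non-empty $A\subset X$ is either finite (hence discrete) or meets the open discrete set $D$ in a dense open subset of $A$, so the family $\F=\{f_n\}_{n\in\w}$ is weakly discontinuous and lies in $SC_p(X)$. Yet $\F$ is not potentially bounded: given any positive reals $(t_n)_{n\in\w}$, choose $d\in\w^\w$ with $t_nd(n)\to\infty$; then $\sup_{n}|t_nf_n(d)|=\infty$, so $\{t_nf_n\}_{n\in\w}$ is not bounded in $\IR^X$. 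The corollary becomes correct --- and your (equivalently, the paper's) two-proposition argument closes with no further work --- once ``regular'' is strengthened to ``perfectly normal'', which is how the statement is actually quoted in the Introduction: a perfectly normal $\sigma$-compact space is a countable union of perfectly normal compacta, hence hereditarily Lindel\"of, so Proposition~\ref{p4.2} gives $\dec_{cl}(\F)\le hl(X)\le\aleph_0$ and Proposition~\ref{p7.1} applies verbatim.
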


The $\sigma$-compactness of $X$ is essential in Corollary~\ref{c7.2}.

\begin{example} The set $\{\pi_n\}_{n\in\w}$ of coordinate projections $\pi_n:\w^\w\to\w$, $\pi_n:(x_k)_{k\in\w}\mapsto x_k$, is not potentially bounded in the function space $C_p(\w^\w)$.
\end{example}

\begin{proof} Assuming that the set $\{\pi_n\}_{n\in\w}$ is potentially bounded, we could find a sequence of positive real number $(\e_n)_{n\in\w}$ such that the set $\{\e_n\pi_n\}_{n\in\w}$ is bounded in $C_p(\w^\w)$. Choose any increasing number sequence $x=(x_n)_{n\in\w}\in\w^\w$ such that $\lim_{n\to\infty}\e_n\cdot x_n=\infty$ and observe that for the point $x$ the set $\{\e_n\pi_n(x)\}_{n\in\w}=\{\e_nx_n\}_{n\in\w}\subset \IR$ is not bounded, which means that the function sequence $\{\e_n\pi_n\}_{n\in\w}$ is not bounded in $C_p(\w^\w)$.
\end{proof}

\section{Spaces with countable strong fan tightness and related spaces}\label{s:fan}

In this section we shall discuss spaces with countable strong fan tightness and their relation to $R$-separable and scatteredly $R$-separable spaces.

Let us recall \cite{Sakai} that a topological space $X$ has {\em countable strong fan tightness} if for any sequence $(A_n)_{n\in\w}$ of subsets of $X$ and a point $a\in\bigcap_{n\in\w}\bar A_n$ in the intersection of their closures, there is a sequence of points $a_n\in A_n$, $n\in\w$, such that the set $\{a_n\}_{n\in\w}$ contains the point $a$ in its closure.

It is easy to see that each first countable space has countable strong fan tightness.

Following \cite{BBM} we say that a topological space $X$ is {\em $R$-separable} if for each sequence $(D_n)_{n\in\w}$ of dense subsets of $X$ there is a sequence of points $x_n\in D_n$, $n\in\w$, such that the set $\{x_n\}_{n\in\w}$ is dense in $X$. We shall say that a topological space $X$ is {\em countably $R$-separable} if each countable subspace of $X$ is $R$-separable.

Proposition 52 of \cite{BBM} implies:

\begin{proposition} Each topological space with countable strong fan tightness is countably $R$-separable.
\end{proposition}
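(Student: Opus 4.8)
The plan is to unwind the definition: to prove $X$ is countably $R$-separable I must show that every countable subspace $Z\subset X$ is $R$-separable. First I would note that countable strong fan tightness is inherited by subspaces — if $(A_n)_{n\in\w}$ are subsets of $Z$ and $a\in\bigcap_n\overline{A_n}^Z=\bigcap_n(\overline{A_n}^X\cap Z)$, then $a\in\bigcap_n\overline{A_n}^X$, so strong fan tightness of $X$ supplies points $a_n\in A_n$ with $a\in\overline{\{a_n\}}^X$, and since $a\in Z$ this gives $a\in\overline{\{a_n\}}^Z$. Hence I may replace $X$ by $Z$ and assume the ambient space is a countable space with countable strong fan tightness. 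Fix an enumeration $Z=\{z_k:k\in\w\}$ and a sequence $(D_n)_{n\in\w}$ of dense subsets of $Z$; the goal is to choose points $x_n\in D_n$ so that $\{x_n:n\in\w\}$ is dense in $Z$.

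The key device is a partition of the index set. Fix a partition $\w=\bigsqcup_{k\in\w}N_k$ into pairwise disjoint infinite sets $N_k$. For a fixed $k$, every $D_n$ is dense in $Z$, so $z_k\in\overline{D_n}$ for all $n$, and in particular $z_k\in\bigcap_{n\in N_k}\overline{D_n}$. Reindexing $N_k$ as a sequence and applying countable strong fan tightness to the family $(D_n)_{n\in N_k}$, I obtain points $x_n\in D_n$ for $n\in N_k$ such that $z_k$ lies in the closure of $\{x_n:n\in N_k\}$. Since the $N_k$ partition $\w$, this procedure, carried out for every $k$, selects exactly one point $x_n\in D_n$ for each $n\in\w$, producing a single sequence $(x_n)_{n\in\w}$ with $x_n\in D_n$.

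It remains to verify density: for each $k$ we have $z_k\in\overline{\{x_n:n\in N_k\}}\subset\overline{\{x_n:n\in\w\}}$, and since $k$ ranges over all of $\w$ every point of $Z$ lies in the closure of $\{x_n:n\in\w\}$, so this set is dense in $Z$. This is exactly $R$-separability of $Z$, completing the argument; the reasoning is the specialization of \cite[Prop.~52]{BBM} to the present setting.

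The step I expect to be the main obstacle — or rather the one requiring the essential idea — is reconciling the two quantifier patterns: strong fan tightness produces, for each target point $z_k$ separately, a selection of one point from each member of a countable family of dense sets, whereas $R$-separability demands a single selection of one point from each $D_n$ that simultaneously has all of $Z$ in its closure. The partition of $\w$ into infinitely many infinite blocks is precisely what lets a single global selection absorb the countably many local selections, one block per target point, without any member $D_n$ being asked to contribute twice.
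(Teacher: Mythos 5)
Your argument is correct. Note that the paper itself gives no proof of this proposition at all: it simply records that it ``follows from Proposition~52 of \cite{BBM}'' and moves on. What you have written is, in effect, a self-contained proof of that cited fact specialized to the present setting: first the (easy but necessary) observation that countable strong fan tightness is hereditary, so each countable subspace $Z$ inherits it; then the partition $\w=\bigsqcup_k N_k$ into infinite blocks, applying strong fan tightness once per point $z_k$ of the countable space to the subfamily $(D_n)_{n\in N_k}$, and assembling the block-wise selections into a single global selection whose closure contains every $z_k$ and is therefore dense. Both steps check out: the hereditariness argument correctly passes between closures in $Z$ and in $X$, and the disjointness of the blocks guarantees each $D_n$ contributes exactly one point. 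So your proof is a valid, more informative substitute for the paper's bare citation; the only thing it ``costs'' is re-deriving a known result, and what it buys is that the reader need not consult \cite{BBM}.
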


Our next proposition follows from Theorem 48 of \cite{BBM}.

\begin{proposition} A topological space $X$ is countably $R$-separable if each countable subspace $Z$ of $X$ has character $\chi(Z)<\cov(\M)$.
\end{proposition}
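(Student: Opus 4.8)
The plan is to unfold the definition: since ``countably $R$-separable'' means that every countable subspace is $R$-separable, it suffices to prove that \emph{each countable space $Z$ with $\chi(Z)<\cov(\M)$ is $R$-separable}. So fix such a $Z$ together with a sequence $(D_n)_{n\in\w}$ of dense subsets, and equip each $D_n$ with the discrete topology. First I would pass to the \emph{space of selections} $P=\prod_{n\in\w}D_n$ with the product topology, which is a Polish space since each factor is a countable discrete space. The goal is to locate in $P$ a single point $(x_n)_{n\in\w}$ for which $\{x_n:n\in\w\}$ is dense in $Z$. For each point $z\in Z$ fix a neighborhood base $\{B_{z,\alpha}:\alpha<\chi(Z)\}$; then a selection $(x_n)_{n\in\w}$ is dense in $Z$ if and only if the set $\{x_n:n\in\w\}$ meets every basic neighborhood $B_{z,\alpha}$. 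The number of these neighborhoods is at most $|Z|\cdot\chi(Z)\le\aleph_0\cdot\chi(Z)=\chi(Z)<\cov(\M)$, which is exactly where the hypothesis enters.

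The next step is a Baire-category estimate. For a nonempty open set $B\subset Z$ consider the ``missing set'' $M_B=\{(x_n)_{n\in\w}\in P:x_n\notin B\ \text{for all}\ n\}=\prod_{n\in\w}(D_n\setminus B)$. I would check that each $M_B$ is closed (a product of the sets $D_n\setminus B$, closed in the discrete $D_n$) and nowhere dense in $P$: given any basic cylinder of $P$ fixing the coordinates $x_0,\dots,x_{m-1}$, the density of $D_m$ lets me choose $x_m\in D_m\cap B\ne\emptyset$, producing a nonempty subcylinder disjoint from $M_B$. Hence every $M_B$ is meager, and a selection fails to be dense exactly when it lies in $\bigcup_{z,\alpha}M_{B_{z,\alpha}}$, a union of fewer than $\cov(\M)$ meager subsets of $P$.

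To conclude I would split into two cases according to the size of $P$. If $P$ is countable, then all but finitely many factors $D_n$ are singletons; any such singleton is a dense one-point subset of $Z$, so every selection is automatically dense and $Z$ is trivially $R$-separable. If $P$ is uncountable, then infinitely many factors have at least two points, so $P$ has no isolated points and is therefore a perfect Polish space. The main obstacle---and the point where the hypothesis $\chi(Z)<\cov(\M)$ is cashed in---is the standard fact that the covering number of the meager ideal of every perfect Polish space equals $\cov(\M)$; granting this, the fewer than $\cov(\M)$ meager sets $M_{B_{z,\alpha}}$ cannot cover $P$. Any point of $P\setminus\bigcup_{z,\alpha}M_{B_{z,\alpha}}$ is then a selection meeting every basic neighborhood, hence a dense selection, which proves that $Z$ is $R$-separable and completes the argument.
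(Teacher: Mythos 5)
Your argument is correct. Note, however, that the paper does not actually prove this proposition: it simply derives it from Theorem~48 of \cite{BBM} (which asserts the $R$-separability of countable spaces of small $\pi$-weight). What you have written is, in effect, a self-contained reconstruction of the proof of that cited theorem, and all the steps check out: the selection space $P=\prod_{n\in\w}D_n$ is Polish; a selection is dense in $Z$ iff it meets each member of the family $\{B_{z,\alpha}\}$, which is a $\pi$-base of $Z$ of size at most $\aleph_0\cdot\chi(Z)<\cov(\M)$; each set $M_B$ is closed and nowhere dense because density of $D_m$ in $Z$ lets you extend any finite partial selection into $B$; the degenerate case where some $D_n$ is a dense singleton is handled correctly; and in the non-degenerate case $P$ is a perfect Polish space, where the covering number of the meager ideal is exactly $\cov(\M)$, so fewer than $\cov(\M)$ meager sets cannot cover $P$. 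The only ingredient you invoke without proof is this last standard fact (invariance of $\cov(\M)$ among perfect Polish spaces), which is legitimate to cite. So your route is ``different'' only in that it is more elementary and self-contained than the paper's one-line citation; what it buys is a complete argument, at the cost of redoing work the authors chose to outsource to \cite{BBM}.
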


Here $\cov(\M)$ stands for the smallest cardinality of a cover of the real line $\IR$ by nowhere dense subsets.

In fact, we shall need a bit weaker notion than the countable $R$-separability, called the scattered $R$-separability.

We define a topological space $X$ to be {\em scatteredly $R$-separable} if for each countable subspace $Z\subset X$ without isolated points and each sequence $(D_n)_{n\in\w}$ of dense subsets in $Z$ there is a sequence of points $z_n\in D_n$, $n\in\w$, forming a non-scattered subspace $\{z_n\}_{n\in\w}$.
It is clear that each countably $R$-separable space is scatteredly $R$-separable, so we get the following chain of implications:
\smallskip

\centerline{first countable $\Rightarrow$ countable strong fan tightness $\Rightarrow$ countably $R$-separable $\Rightarrow$ scatteredly $R$-separable.}
\smallskip

\begin{problem} Is each scatteredly $R$-separable space countably $R$-separable?
\end{problem}

\section{Potentially bounded $\infty$-convex subsets in function spaces $SC_p(X)$}

Theorem~\ref{t6.1} established the weak discontinuity of
$\infty$-convex subsets of the function spaces $SC_p^*(X)$. The
following theorem does the same for function spaces $SC_p(X)$.

\begin{theorem}\label{t9.1} If a topological space $X$ has countable tightness and is scatteredly $R$-separable, then each potentially bounded $\infty$-convex subset $\F\subset SC_p(X)$ is weakly discontinuous and has network weight $nw(\F)\le nw(X)$.
\end{theorem}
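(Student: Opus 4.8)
The plan is to follow the architecture of the proof of Theorem~\ref{t6.1}, upgrading it at the single point where the boundedness of the functions was essential. The network-weight estimate $nw(\F)\le nw(X)$ will be automatic from Corollary~\ref{c4.3} once $\F$ is known to be weakly discontinuous, so everything reduces to weak discontinuity. By Proposition~\ref{p4.1} this is equivalent to weak discontinuity of the diagonal map $\Delta\F:X\to\IR^\F$, and since $X$ has countable tightness, Proposition~2.3 of \cite{BB} reduces it to showing that for every countable $Q\subset X$ the restriction $\Delta\F|Q$ has a point of continuity. So I would argue by contradiction: assume $\Delta\F|Q$ is everywhere discontinuous for some countable $Q$, fix $f_0\in\F$ and an open dense $U\subset Q$ on which $f_0|U$ is continuous, and enumerate $U=\{x_n:n\in\IN\}$. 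Since every isolated point of $U$ would be a continuity point of $\Delta\F|U$, the everywhere discontinuity forces $U$ to be a \emph{countable crowded} subspace of $X$, exactly the kind of space to which scattered $R$-separability applies.

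The second step replaces the norm rescaling of Theorem~\ref{t6.1} by potential boundedness. For each $n$ pick $g_n\in\F$ with $g_n|U$ discontinuous at $x_n$; applying the potential boundedness of $\F$ to the sequence $(g_n)$ and truncating the resulting scalars below $1$, I obtain $s_n\in(0,1]$ with $\{s_ng_n\}_{n\in\w}$ bounded in $\IR^X$. Setting $f_n=(1-s_n)f_0+s_ng_n\in\F$, the sequence $(f_n)$ is bounded in $\IR^X$, and since $f_0|U$ is continuous while $s_ng_n|U$ is discontinuous at $x_n$, each $f_n|U$ is still discontinuous at $x_n$. Thus any combination $s=t_0f_0+\sum_{n\ge 1}t_nf_n$ with $t_0\ge 0$ and $t_0+\sum_{n\ge 1}t_n=1$ again lies in $\F$ by its $\infty$-convexity, because $(f_n)$ is bounded.

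The main obstacle is the tail estimate. In Theorem~\ref{t6.1} the inductive choice of the coefficients used the finite norms $\|f_n\|$ to guarantee that the tail $F_n=\sum_{k>n}t_kf_k$ has oscillation at $x_n$ strictly smaller than that of the partial sum; here the $f_n$ may be unbounded on $U$, so that $\osc_{a}(f_k|U)$ can be \emph{infinite} and no choice of a single coefficient can control it. This is exactly what scattered $R$-separability is meant to repair. Each $f_k\in SC_p(X)$ is scatteredly continuous, so the set where $f_k|U$ has infinite oscillation is contained in the nowhere dense discontinuity set $D_k=\{x\in U:f_k|U\text{ is discontinuous at }x\}$; hence the open sets $U\setminus\overline{D_k}$ are dense. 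Feeding a carefully chosen sequence of such dense sets (refined by the finite-oscillation level sets of the $f_k$) into the scattered $R$-separability of $X$ applied to the crowded space $U$, I would extract points assembling a crowded (non-scattered) subspace $A\subset U$ along which every $\osc_a(f_k|A)$ is finite, while enough of the jumps survive to keep the diagonal everywhere discontinuous on $A$.

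On such an $A$ all oscillations are finite, so the inductive construction of Theorem~\ref{t6.1} goes through with $A$ in place of $U$ and with $\osc_a(\,\cdot\,|A)$ in place of the norms: choosing the coefficients $t_n$ small enough at each stage makes the total tail oscillation at each target point dominated by the jump produced there, and the limit function $s=t_0f_0+\sum_{n\ge1}t_nf_n\in\F$ then has $s|A$ discontinuous at every point of $A$. Since $A\ne\emptyset$, this contradicts the scattered continuity of the single function $s\in SC_p(X)$, whose restriction $s|A$ must have a continuity point. The contradiction yields a continuity point of $\Delta\F|Q$ for every countable $Q$, whence $\F$ is weakly discontinuous, and $nw(\F)\le nw(X)$ follows from Corollary~\ref{c4.3}. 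I expect the genuine difficulty to be concentrated in the third paragraph: organizing the dense sets so that the selected crowded $A$ simultaneously has finite oscillations and retains the witnessed jumps needed to force $s|A$ to be everywhere discontinuous.
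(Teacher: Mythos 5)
Your first two paragraphs match the paper's proof exactly (the reduction via Proposition~\ref{p4.1} and countable tightness to finding a continuity point of $\Delta\F|Q$ on countable $Q$, and the use of potential boundedness to replace the norm rescaling of Theorem~\ref{t6.1} by a pointwise-bounded sequence $g_n=(1-\lambda_n)g_0+\lambda_n f_n\in\F$ with $g_n|U$ discontinuous at $x_n$). The gap is in your third paragraph, and it is genuine, not merely a detail you deferred. You propose to use scattered $R$-separability to extract a crowded $A\subset U$ on which every $\osc_a(f_k|A)$ is finite while ``enough jumps survive.'' Two things go wrong. First, a countable crowded space $U$ is meager in itself, so the closed nowhere dense sets $\overline{D(f_k|U)}$ may very well cover $U$; there need not exist even a single point of $U$ at which all the $f_k|U$ have finite oscillation, so no selection principle can produce a crowded set of such points. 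Second, discontinuity of $f_n|U$ at $x_n$ is a statement about the trace topology of $U$; after passing to a proper subset $A\subset U$ the restrictions $f_n|A$ may become continuous at the selected points (and the witnesses $x_n$ need not lie in $A$ at all), so ``everywhere discontinuity of the diagonal'' is not inherited by $A$. Scattered $R$-separability hands you one point per dense set with no control over either phenomenon, so the two requirements you need cannot be arranged simultaneously by this route.

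The paper resolves the difficulty by a dichotomy you do not consider, and it uses scattered $R$-separability for the opposite purpose: to exploit unboundedness rather than to tame it. Either (Case 1) there is a non-empty open $V\subset U$ on which every $g_n$ is bounded in the genuine sup-norm sense; then $\Delta\F|V$ is still everywhere discontinuous because $V$ is \emph{open} in $U$, and the inductive construction of Theorem~\ref{t6.1} runs verbatim on $V$ with the norms $\sup_{x\in V}|g_n(x)|$. Or (Case 2) on every non-empty open $V\subset U$ some $g_n$ is unbounded; then each set $D_m=\{x\in U:\sup_{n}|g_n(x)|>m\}$ is dense in $U$, and scattered $R$-separability applied to the dense sets $D_{16^k}$ yields points $x_k\in D_{16^k}$ forming a non-scattered subspace. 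Choosing $n_k$ with $|g_{n_k}(x_k)|>\max\{16^k,\tfrac67\sup_n|g_n(x_k)|\}$ and coefficients $t_k\in[8^{-k},2\cdot 8^{-k}]$ inductively so that the partial sums do not cancel at $x_k$, one gets $|s(x_k)|\ge\tfrac1{6}(16/8)^k\to\infty$ for $s=\sum_k t_kg_{n_k}$; a function unbounded on every infinite subset of a non-scattered set cannot be weakly discontinuous, contradicting $t_0g_0+s\in\F\subset SC_p(X)$. So the missing idea is precisely this bounded-on-an-open-set versus densely-unbounded dichotomy, together with the observation that in the unbounded case one should build a member of $\F$ that blows up along a non-scattered set rather than try to recover finite oscillations.
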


\begin{proof} By Proposition~\ref{p4.1}, the weak discontinuity of the family $\F$ is equivalent to the weak discontinuity of the diagonal product $\Delta\F:X\to\IR^\F$, $\Delta\F:x\mapsto (f(x))_{f\in\F}$. Since the space $X$ has countable tightness, the weak discontinuity of the map $\Delta\F$ will follow as soon as we check that for each countable subset $Q\subset X$ the restriction $\Delta\F|Q$ has a point of continuity. Assume conversely that for some countable subset $Q\subset X$ the restriction $\Delta\F|Q$  has no continuity points. Then the set $Q$ has no isolated points. Let $g_0\in\F$ be any function. Since $g_0$ is weakly discontinuous, there is an open dense set $U\subset Q$ such that the restriction $g_0|U$ is continuous.

Let $U=\{x_n\}_{n\in\IN}$ be an enumeration of the set $U$. Since
the function $\Delta\F|U$ is everywhere discontinuous, for every
$n\in\IN$ we can choose a function $f_n\in \F$ whose restriction
$f_n|U$ is discontinuous at the point $x_n$. Since the family $\F$
is potentially bounded, there is a number sequence
$(\lambda_n)_{n\in\IN}\in(0,1]^\w$ such that the sequence
$\{\lambda_n f_n\}_{n\in\IN}$ is bounded in $SC_p(X)$. Then the set
of functions $g_n=(1-\lambda_n)g_0+\lambda_n f_n$, $n\in\w$, is
bounded and belongs to the convex set $\F$. Moreover, each function
$g_n|U$ is discontinuous at $x_n$.   The boundedness of the function
sequence $(g_n)_{n\in\IN}$ guarantees that
$\sup_{n\in\IN}|g_n(x)|<\infty$ for all $x\in X$.
\smallskip

Now we consider two cases.
\smallskip

1. There is a non-empty open subset $V\subset U$ such that for every $n\in\IN$ the function $g_n|V$ is bounded. Repeating the argument of the proof of Theorem~\ref{t6.1}, we can find a sequence of positive real numbers $(t_n)_{n\in\w}$ with $\sum_{n=1}^\infty t_n=1$ such that the function $g=\sum_{n=1}^\infty t_ng_n$ has everywhere discontinuous restriction $g|V$, which is not possible as $g\in\F\subset SC_p(X)$ by the $\infty$-convexity of $\F$.
\smallskip

2. For each non-empty open subset $V\subset U$ there is $n\in\IN$ such that the function $g_n|V$ is unbounded. Then for each $m\in\IN$ the set $D_m=\{x\in U:\sup_{n\in\IN}|g_n(x)|>m\}$ is dense in $U$.

Since the space $X$ is scatteredly $R$-separable and the countable space $U\subset X$ has no isolated points, there is a non-scattered subspace $\{x_k\}_{k\in\IN}\subset U$ such that $x_k\in D_{16^k}$ for every $k\in\IN$. Since $16^k<\sup_{n\in\IN}|g_n(x_k)|<\infty$, for every $k\in\IN$ we can choose a number $n_k\in\IN$ such that $|g_{n_k}(x_k)|>\max\{16^k, \frac67\sup_{n\in\IN}|g_n(x_k)|\}$.

By induction we shall construct a sequence of real numbers $(t_k)_{k\in\w}\in[0,1]^\w$ such that
\begin{enumerate}
\item $\frac1{8^k}\le t_k\le \frac2{8^k}$ for all $k\in\IN$;
\item $|\sum_{i=1}^kt_ig_{n_i}(x_k)|\ge \frac1{2\cdot 8^k}{|g_{n_k}(x_k)|}$ for every $j\le k$;
\end{enumerate}
We start the inductive construction letting $t_1=\frac12$. Now assume that for some number $k>1$ the real numbers $t_1,\dots,t_{k-1}$ satisfying the conditions (1), (2) have been constructed.
Consider the function $s_{k-1}=\sum_{i=1}^{k-1}t_ig_{n_i}$. Observe that the interval
$s_{k-1}(x_k)+g_{n_k}(x_k)[\frac1{8^k},\frac2{8^k}]$ has length $\frac1{8^k}{|g_{n_k}(x_k)|}$ and hence there is a number $t_k\in[\frac1{8^k},\frac2{8^k}]$ such that $|s_{k-1}(x_k)+t_kg_{n_k}(x_k)|\ge \frac1{2\cdot 8^k}{|g_{n_k}(x_k)|}$.

Since the set $\{g_{n_k}\}_{k\in\IN}$ is bounded and $\sum_{k=1}^\infty t_k\le 1$, the function $s=\sum_{k=1}^\infty t_kg_{n_k}\in\IR^X$ is well-defined. Observe that for every $k\in\IN$ the choice of the function $g_{n_k}$ with $|g_{n_k}(x_k)|>\frac67\sup_{n\in\IN}|g_n(x_k)|$ implies
the lower bound:
$$
\begin{aligned}
|s(x_k)|&\ge|\sum_{i=1}^kt_ig_{n_i}(x_k)|-\sum_{i=k+1}^\infty t_i|g_{n_i}(x_k)|\ge \\ &\ge\frac1{2\cdot 8^k}{|g_{n_k}(x_k)|}-\sum_{i=k+1}^\infty\frac2{8^i}\frac76|g_{n_k}(x_k)|=\\
&=|g_{n_k}(x_k)|\Big(\frac1{2\cdot 8^k}-\frac{1}{3\cdot 8^{k}}\Big)=\frac{1}{6\cdot 8^k}|g_{n_k}(x_k)|>\frac{16^k}{6\cdot 8^k}.
\end{aligned}
$$
So, $s(x_k)\to\infty$ as $k\to\infty$, which implies that the function $s$ restricted to the non-scattered subset $\{x_k\}_{k\in\IN}$ is not weakly discontinuous. Then for the number $t_0=1-\sum_{k=1}^\infty t_k$  the function $t_0g_0+s$ also is not weakly discontinuous, which is not possible as $t_0f_0+s\in\F\subset SC_p(X)$ by the $\infty$-convexity of $\F$.

This contradiction shows that the function family $\F$ is weakly discontinuous and hence has network weight $nw(\F)\le nw(X)$ by Corollary~\ref{c4.3}.
\end{proof}

Combining Theorem~\ref{t9.1} with Proposition~\ref{p3.2} we deduce:

\begin{corollary}\label{c9.2} If a regular topological space $X$ has countable tightness and is scatteredly $R$-separable, then each compact convex subset $\F\subset SC_p(X)$ is weakly discontinuous and has weight $w(\F)\le hl(X)\cdot h_{cl}kd(X)$.
\end{corollary}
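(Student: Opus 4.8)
The plan is to verify that a compact convex subset $\F\subset SC_p(X)$ satisfies the hypotheses of Theorem~\ref{t9.1}, thereby obtaining its weak discontinuity, and then to feed this into the weight estimate of Proposition~\ref{p3.2}. So the proof is essentially an assembly of results already established in the preceding sections, and the only points requiring care are the derivations of the two hypotheses of Theorem~\ref{t9.1} from compactness and convexity.

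First I would observe that $SC_p(X)$, being a linear subspace of the product $\IR^X$ equipped with the (locally convex) topology of pointwise convergence, is itself a locally convex linear topological space. Hence, by the elementary observation recorded in the Introduction, every compact convex subset $\F$ of $SC_p(X)$ is $\sigma$-convex. Proposition~\ref{p5.1} then shows that $\F$ is simultaneously bounded and $\infty$-convex. Since a bounded set is trivially potentially bounded (take $t_n=1$ for all $n$ in the definition), $\F$ is a potentially bounded $\infty$-convex subset of $SC_p(X)$. As $X$ has countable tightness and is scatteredly $R$-separable, Theorem~\ref{t9.1} applies and yields that the family $\F$ is weakly discontinuous.

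For the weight bound I would combine three inputs. Since $X$ is regular and $\F$ is weakly discontinuous, Proposition~\ref{p4.2} gives $\dec_{cl}(\F)\le hl(X)$. Since $\F$ is a compact function family in $\IR^X$, Proposition~\ref{p3.2} provides $w(\F)\le\dec_{cl}(\F)\cdot h_{cl}kd(X)\cdot s(X)$. Finally, using the standard inequality $s(X)\le hl(X)$ (a discrete subspace $D$ has Lindel\"of number $l(D)=|D|$, as its cover by singletons admits no smaller subcover, whence $hl(X)\ge s(X)$) together with $\dec_{cl}(\F)\le hl(X)$, the right-hand side is at most $hl(X)\cdot h_{cl}kd(X)\cdot hl(X)=hl(X)\cdot h_{cl}kd(X)$, where we absorb the repeated factor using the idempotence $\kappa\cdot\kappa=\kappa$ of infinite cardinal multiplication.

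I expect no serious obstacle, as the statement is a direct combination of earlier results. The one point meriting a moment's attention is confirming that the compactness-and-convexity hypothesis genuinely delivers both properties needed by Theorem~\ref{t9.1}, namely $\infty$-convexity \emph{and} potential boundedness; this it does through the chain \emph{compact convex} $\Rightarrow$ \emph{$\sigma$-convex} $\Rightarrow$ \emph{bounded and $\infty$-convex} $\Rightarrow$ \emph{potentially bounded and $\infty$-convex}. One should also keep in mind that it is precisely the local convexity of $\IR^X$ that underwrites the first implication, so that the full strength of the corollary's hypotheses (regularity, countable tightness, scattered $R$-separability) is invoked: regularity for Proposition~\ref{p4.2}, and the other two for Theorem~\ref{t9.1}.
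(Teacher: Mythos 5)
Your proposal is correct and follows essentially the same route the paper intends: compact convex $\Rightarrow$ $\sigma$-convex $\Rightarrow$ bounded and $\infty$-convex (Proposition~\ref{p5.1}) $\Rightarrow$ potentially bounded and $\infty$-convex, so Theorem~\ref{t9.1} gives weak discontinuity, and then Proposition~\ref{p4.2} together with Proposition~\ref{p3.2} and $s(X)\le hl(X)$ gives the weight bound. The paper leaves all of this implicit in the phrase ``combining Theorem~\ref{t9.1} with Proposition~\ref{p3.2}''; your write-up simply supplies the missing details, including the use of Proposition~\ref{p4.2}, correctly.
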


Let us recall that a topological space $X$ is {\em perfectly normal} if it is normal and each closed subset is of type $G_\delta$ in $X$. It is easy to see that a compact space $X$ is perfectly normal if and only if $hl(X)\le\aleph_0$. Since each perfectly normal compact space is first countable, it has countable tightness and countable strong fan tightness. This observation and Corollary~\ref{c9.2} imply

\begin{corollary} For a perfectly normal compact space $X$, each compact convex subset in the function space $SC_p(X)$ is metrizable.
\end{corollary}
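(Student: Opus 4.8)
The plan is to read this off from Corollary~\ref{c9.2}, once I have checked that a perfectly normal compact space $X$ meets every hypothesis there and that the weight bound it supplies is in fact countable. The preceding paragraph already records the two facts I need about $X$ itself: that $hl(X)\le\aleph_0$, and that first countability of $X$ yields countable tightness and countable strong fan tightness. To complete the hypotheses of Corollary~\ref{c9.2} I would add two observations: by the chain of implications in Section~\ref{s:fan}, countable strong fan tightness forces $X$ to be scatteredly $R$-separable; and a compact Hausdorff space is regular. Thus Corollary~\ref{c9.2} applies to $X$.

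Applying it to an arbitrary compact convex subset $\F\subset SC_p(X)$, I obtain that $\F$ is weakly discontinuous with
$$w(\F)\le hl(X)\cdot h_{cl}kd(X).$$
Now I would collapse the right-hand side: $hl(X)\le\aleph_0$ by the above, while every compact space has $h_{cl}kd(X)=1$, as noted earlier. Hence $w(\F)\le\aleph_0$. Finally, $\F$ is a subspace of $SC_p(X)\subset\IR^X$ and is therefore Hausdorff; a compact Hausdorff space of countable weight is second countable and hence metrizable, which finishes the argument.

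There is no serious obstacle here, since the statement is a direct specialization of Corollary~\ref{c9.2}. The only points that require care are bookkeeping: confirming that first countability delivers scattered $R$-separability through the implications of Section~\ref{s:fan}, and invoking the identity $h_{cl}kd(X)=1$ for compact $X$ so that the weight estimate degenerates to $\aleph_0$.
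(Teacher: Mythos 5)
Your argument is correct and follows the paper's own route exactly: the paper likewise derives the result from Corollary~\ref{c9.2} by noting that perfect normality of a compact space gives $hl(X)\le\aleph_0$ and first countability (hence countable tightness and countable strong fan tightness, and so scattered $R$-separability), while compactness gives $h_{cl}kd(X)=1$, so the weight bound collapses to $\aleph_0$ and metrizability follows. Your bookkeeping of the hypotheses of Corollary~\ref{c9.2} and of the final step (compact Hausdorff plus countable weight implies metrizable) is exactly what the paper leaves implicit.
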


\section{Some Open Problems}

Let us recall \cite{God} that a topological space $K$ is {\em Rosenthal compact} if $K$ is homeomorphic to a compact subspace of the space $\mathcal B_1(X)\subset\IR^X$ of functions of the first Baire class on a Polish space $X$. In this definition the space $X$ can be assumed to be equal to the space $\w^\w$ of irrationals.

\begin{problem}\label{pr4} Is each Rosenthal compact space $K$ homeomorphic to a compact subset of the function space $SC_p(\w^\w)$?
\end{problem}

According to Theorem~3 of \cite{BBK}, this problem has affirmative solution in the realm of zero-dimensional separable Rosenthal compacta.
A particularly interesting instance of Problem~\ref{pr4} concerns non-metrizable convex Rosenthal compacta. One of the simplest spaces of this sort is the Helly space. We recall that the {\em Helly space} is the subspace of $B_1(I)$ consisting of all non-decreasing functions $f:I\to I$ of the unit interval $I=[0,1]$.

\begin{problem} Is the Helly space homeomorphic to a compact subset of the function space $SC_p(\w^\w)$?
\end{problem}


\end{document}